\newtheorem{defn}{Definition}
\newtheorem{prop}[defn]{Proposition}
\newtheorem{rmk}[defn]{Remark}
\newtheorem{eg}[defn]{Example}
\numberwithin{equation}{section}
\begin{document}
\title{Translations and extensions of the Nicomachus identity}

\author{Seon-Hong Kim\footnotemark[1], Kenneth B. Stolarsky\footnotemark[2]}

\date{}

\renewcommand{\thefootnote}{\fnsymbol{footnote}}
\footnotetext[1]{Sookmyung
Women's University (shkim17@sookmyung.ac.kr), University of Illinois at Urbana-Champaign (stolarsky.ken@gmail.com)}
\renewcommand{\thefootnote}{\arabic{footnote}}

 \maketitle
\begin{abstract}
    \par\noindent
   We search for Nicomachean identities by adding translation parameters, variable parameters, sequence products and adjoining further numbers to sequences. The solutions of definite and indefinite quadratic forms arise in this study of cubic equations obtained from translation parameters. Our search leads to many general Nicomachean-type identities. We also study the geometry of adjoining two numbers to sequences satisfying the Nicomachean identity.
    \end{abstract}
    \noindent {\bf Key words and phrases:} sums of cubes, variable parameters, translation parameters, Nicomachean identity, Pell's equations, sequential multiplications, finite differences, Fibonacci numbers, elliptic curves
\vskip 0.3cm
\noindent {\bf 2010 Mathematics Subject Classification:} Primary 11B83; Secondary 11D25, 30C15

\par\noindent
\section{INTRODUCTION}\label{intro}
Define an operator $\nu$ on finite sequences of complex numbers $a_i$, i.e., sequences $\sigma=\{a_1, a_2, \cdots, a_n\}$, by
$$
\nu(a_1, a_2, \cdots, a_n)=\left(\sum_{i=1}^n a_i\right)^2-\sum_{i=1}^n a_i^3.
$$
The ancient classical Nicomachean identity is
$$
\nu\left(1, 2, 3, \cdots, n\right)=0.
$$
The above definition of $\nu$ does in fact make sense for the $a_i$ being elements of any ring.
\par
The number and variety of Nicomachean-type identities is very large. Let $\sigma$ be an arbitrary sequence of positive integers. Then Mason has shown \cite{Ma}, by using bag products, that it can be encoded in a Nicomachean-type identity (cube sum = sum squared). In the other direction, Barbeaux and Seraj \cite{Ba} used a simple estimate to show that the number of Nicomachean-type identities for a sequence of length $n$, with $n$ fixed, is finite. For previous studies of various points of views about Nicomachean identity, see \cite{Ba}, \cite{Ce}, \cite{Fe}, \cite{Ki}, \cite{Kl}, \cite{Lo}, \cite{Lu}, \cite{Ma}, \cite{Pa}, \cite{Ra}, \cite{Wa}. Among these the present paper has a point of view most similar to that of Barbeau and Seraj \cite{Ba}, and, like \cite{Ba}, makes use of Pell's equations.
\par
A natural question is ``for what sequences of positive integers or positive rational numbers does $\nu=0$?''. Note that we allow repetitions in our sequences.
\par
We immediately note that aside from the trivial $a_i=0$ for all $i$ case there are three very simple types of sequences with $\nu=0$.
\par
(1) The case $a_i=n$ for $1\leq i\leq n$.
\par
(2) The case in which, for some re-ordering of the $a_i$, each $a_i$ is followed by $-a_i$.
\par
(3) The case $\sigma=\{\sigma_1; \,\sigma_2\}$ where $\sigma_1$ belongs to the case (2) and $\sigma_2$ consists of some integer $m$ repeated $m$ times.
\par\noindent
Here (3) is a combination of (1) and (2). We also note that $\nu(\sigma)$ is the same for all re-orderings of $\sigma$.
\par
Our approach here is to work in a more general setting in which some or all of the $a_i$ are variable parameters, to compute the variable or multivariable function $\nu(\sigma)$, and then to find values of the parameters for which $\nu(\sigma)=0$. Ideally the resulting $a_i$ will be positive rational numbers, but other outcomes may nonetheless be of interest.
\par
For an example, take
$$
\sigma=\{1,2,2,3,4,x,6,8\}.
$$
Then
$$
\nu(\sigma)=(5-x)(x-4)(8+x)
$$
and we have three non-trivial examples of $\nu(\sigma)=0$. Here the pair $(-8,8)$ may be removed.
\par
Another problem is to determine what extensions of a sequence with $\nu=0$ also have this property. For an example, let
$$
\sigma=\{1,2,3,\cdots, 16, a,b\}.
$$
To avoid trivialities assume $a+b\neq 0$. Then $\nu(\sigma)=0$ is equivalent to
$$
a^2-a+b^2-b-ab=272.
$$
A brief search yields the solutions
$$
(a,b)=(2,18), (17, 18), (9,20) \,\,\, \text{and}\,\,\, (12,20).
$$
Each leads to a specific non-trivial solution in positive integers of $\nu(\sigma)=0$, with $(17, 18)$ corresponding to the classical identity. In general, for
$$
\sigma=\{1,2,3,\cdots, n,a,b\}
$$
with $n\geq 5$, $a+b\neq 0$ and $\{a, b\}\neq \{n+1, n+2\}$ the equation $\nu(\sigma)=0$ implies that one of $a$ or $b$ is less than $n-1$. Hence this can produce only sequences $\sigma$ with repetitions. In fact the equation $\nu(\sigma)=0$ can be put into a more attractive form by means of the change of variable
$$
a=u+v+1, \quad b=u-v+1 \,\,\text{or}\,\,b=-u+v+1.
$$
This transforms the equation to
$$
u^2+3v^2=\frac {n^3-1}{n-1}\quad\text{or}\quad 3u^2+v^2=\frac {n^3-1}{n-1},
$$
respectively. For $n\geq 5$ it follows from the above that one of $u$ or $v$ is less than $n-2$, which in turn implies that one of $a$ or $b$ is less than $n-1$.
\par
The above shows that part of our search for Nicomachean identities is tied to the Diophantine analysis of positive definite quadratic forms. We shall see in Section~\ref{trans} that the solutions of indefinite quadratic forms (``Pell's equation'') is also a natural part of this search.
\par
Further pursuit of this ``adjoin $\{a, b\}$ procedure'' leads many general Nicomachean type identities. Of particular interest is
\begin{equation}
\left(\sum_{j=1}^{7n-12}j +(8n-12)+(3n-3)\right)^2=\sum_{j=1}^{7n-12}j^3 +(8n-12)^3+(3n-3)^3.
\label{11id}
\end{equation}
Note the growing gap here between the two largest cubes $(7n-12)^3$ and $(8n-12)^3$. Several natural questions about the gap will be answered in Section~\ref{frac}.
\par
A further main theme is the introduction of translation parameters, especially for sequences $\sigma$ with $\nu(\sigma)=0$. For $\sigma=\{a_1, a_2, \cdots, a_n\}$, let $\sigma+t$ denote the translation of $\sigma$ by $t$, defined by
$$\{a_1+t, a_2+t, \cdots, a_n+t\}.
$$
The simplest example here is
$$
\nu(1+t, 2+t, \cdots, n+t)=-\frac n2 t(t+1)(2t+n+1).
$$
This generalizes the classical identity to which it reduces upon letting $t=0$, the first root of the right-hand side.
\par
Various translations of Nicomachean identities will be considered in this paper. Some translation examples are from the sequences satisfying $\nu=0$ such as
$$
\{1, 1, \cdots, 1, 2,2, \cdots, 2, \cdots, n, n, \cdots, n\},
$$
where each integer $1$ to $n$ is repeated same number of times. Another example is
$$
\{1,2,2,3,4,5,\cdots,m-1, m, m+2\},
$$
where $2$ is added and $m+1$ is deleted in $\{1, 2, \cdots, m+2\}$. Also of interest is
$$
\{a_i b_j : \, 1\leq i \leq p, 1\leq j \leq q\}
$$
where $A=\{a_i\}_{i=1}^p$ and $B=\{b_j\}_{j=1}^q$ satisfy $\nu=0$. This sequential product is called the ``bag product'' in \cite{Ma}. These translations will be studied in Section~\ref{trans} through Section~\ref{thea}.
\par
One could also translate a subsequence $\sigma^*$ of $\sigma$. For example, if $\sigma$ is $\{1,2,3, \cdots, 23\}$ and $\sigma^*$ is the subsequence of elements of $\sigma$ of the form $3k+1$, then $\sigma$ with $\sigma^*$ translated by $t$ is
$$
\sigma(t)=\{1+t, 2,3,4+t,5,6,\cdots, 20, 21, 22+t, 23\}
$$
and
$$
\nu(\sigma(t))=-4t(t+27)(2t-1).
$$
For $t=1/2$ we have a sequence of positive rational numbers, that includes non-integers, for which $\nu(\sigma)=0$. We know of no previous such example. See the front part of Section~\ref{frac} for another example.
\par
In Section~\ref{adjo}, we study the case of adjoining numbers to finite sequences of numbers so that each resulting sequence satisfying $\nu=0$. From the study of  the sequence $\{ n , n , \cdots , n \}$, where the first $n$ appears $n$ times, adjoining an arithmetic progression of length $n$, leads to an interesting sequence with Fibonacci numbers with $\nu(\sigma)=0$. 
\par
In Section~\ref{theg} and Section~\ref{theg1}, we start with a $\sigma$ satisfying
$\nu(\sigma)= 0$, and consider first the case of adjoining one element at a time
to $\sigma$, and then the case of simultaneously adjoining two elements. If $\{a, b\}$ is adjoined to $\sigma$ and $a+b\neq 0$, the $\nu(\sigma; a, b)$ must have the quadratic factor
$$
-a + a^2 - b - a b + b^2 - 2 c,
$$
where $c$ is the sum of elements of $\sigma$. The geometry of such quadratics will be studied in Section~\ref{theg1}.
\vskip0.5cm
\par\noindent
\section{TRANSLATIONS OF NICOMACHEAN IDENTITY AND THEIR REPETITIONS} \label{trans}
We know of no clear reference but it is not hard to show that
\begin{equation}
\nu(1+t, 2+t, \cdots, n+t)=-\frac n2 t(2t+n+1)(t+1).
\label{1tnt}
\end{equation}
Here $t=0$ is the original identity, $t=-(n+1)/2$ leads to an example of the second simple case in the introduction, and $t=-1$ reduces to the original identity with $n$ replaced by $n-1$. The corresponding integral version is
$$
\left(\int_0^x (z+t)\, dz\right)^2-\int_0^x (z+t)^3\, dz=-\frac 12 t^2 x(x+2t).
$$
\par
What can be said if each integer from $1$ to $n$ is repeated $u+1$ times? Let
$$
R=R(u)=R(u)[\,\,\,]
$$
be the operator that replaces any argument $x$ by $u+1$ copies of $x$. For example
$$
R(3)[x]=\{x,x,x,x\}.
$$
Then
\begin{equation}\begin{split}
&\nu \left( \left(R(u)[1], R(u)[2], \cdots, R(u)[n]\right)+t\right)\\
=&-\frac n2 (u+1)(2t+n+1)\left(t(t+1)-u\left( \frac {n(n+1)}2+nt \right)\right).
\nonumber
\end{split}\end{equation}
Here the case $u=0$ reduces to the original identity (\ref{1tnt}), and the case $t=-(n+1)/2$ to the simple case (2) in the introduction as before. The effect of the vanishing of the last factor is less transparent.
\par
The case of $n=2(u+1)$ is of special interest. The last factor becomes
$$
(t+u+1)(t-u(2u+3)).
$$
The case $t=-(u+1)$ causes $\sigma$ to be of the form described in case (3) in the introduction. The case $t=u(2u+3)$ is more interesting. For example, if $u=6$ then $n=14$ and $t=90$. The translation of $\{1,2,3,\cdots,14\}$ by $90$ after a $7$-fold repetition gives us a new sequence for which $\nu=0$.
\par
The search for other cases in which the last factor can be factored leads to Pell's equation. The factorization of the last factor hinges on whether the discriminant with respect to $t$ is a square, i.e., whether
$$
1+n^2(u^2+2u)=y^2
$$
has a solution in integers $n$, $u$ and $y$. This can be written in the form
$$
\frac {y^2-1}{m^2-1}=n^2, \quad m=u+1
$$
or in Pell's equation form
$$
y^2=(m^2-1)n^2+1.
$$
For example, when $m=6$ we have $y^2=35n^2+1$. By standard methods, we quickly find a formula for infinitely many solutions here. For example,
$$
846^2=35\cdot 143^2+1.
$$
Here $u=5$ and $n=143$, so the last factor is
$$
(t-780)(t+66).
$$
The $t=-66$ root tells us that $\{-65, -64, \cdots, -1, 0, 1, \cdots, 76, 77\}$ with each number repeated $6$ times satisfies $\nu=0$, while the meaning of $t=780$ is straightforward. In this $m=6$ case the $y^2=35x^2+1$ solutions have
$$
x=\frac {(6+\sqrt{35})^b-(6-\sqrt {35})^b}{2\sqrt {35}}
$$
and $x=143$ when $b=3$.
\par
In summary, there are an infinity of sequences satisfying $\nu=0$ that are generated by solutions to an infinite number of Pell's equations. We add that the integral analogue here is
\begin{equation}\begin{split}
&\left( \int_0^x (u+1)(z+t)\, dz\right)^2-(u+1)\int_0^x (z+t)^3 \, dz\\
=& -\frac x2 (u+1)(2t+x)\left( t^2-u\left( \frac {x^2}2+xt\right)\right).
\nonumber
\end{split}\end{equation}
\vskip0.5cm
\par\noindent
\section{TRANSLATIONS OF OTHER NICOMACHEAN IDENTITIES} \label{tran3}
There are $\sigma$ with $\nu(\sigma)=0$ that have both arbitrarily small and arbitrarily large translations to other $\nu=0$ sequences. A family of such sequences is given by
$$
\sigma= \{ 1 , 2 , 2 , 3 , 4 , 5 , 7 , 8 , 8 , 10 , 10 , 12 , 12 , \cdots , 2 s , 2 s \},
$$
where the first seven entries are as above and the remaining follow a simple pattern of twice repeated even numbers. It is not hard to show that $\nu =0$ here. Now
$$
 \nu ( \sigma + t ) = - t ( -12 + ( 2 s ^ 2 + 2 s - 1 ) t + ( 2 s + 1 ) t ^ 2 ) .
$$
The above quadratic in $t$ has the root (num)/(den) where
$$
\text{num}=1 - 2 s - 2 s ^ 2 + \sqrt{  49 + 92 s + 8 s ^ 3 + 4 s ^ 4 } \,\,\, \text{and}\,\,\, \text{den}= 2+4s.
$$
Thus $t \rightarrow 0$ as $s \rightarrow \infty$. The product $-12/(1+2s)$ of the quadratic's roots has order $1 / s$, and the other root tends to $-\infty$ as $s \rightarrow \infty$.
\par
The sequence
\begin{equation}
\sigma_m=\{1,2,2,3,4,5,\cdots,m-1, m, m+2\}
\label{hhvv54}
\end{equation}
satisfies $\nu=0$. Then the translation identity
$$
\nu(\sigma_m+t)=t\left(-2(2+t)^2-\frac 12 (t-1)(3+2t)m-\frac 12(t-1)m^2\right)
$$
is a polynomial that is, not surprisingly, cubic in $t$. Here $t=0$ returns us to the identity $\nu(\sigma_m)=0$. We now ask, for what values of $t$ other than $0$ do we have $\nu=0$? The quadratic formula yields the positive root
$$
t=t(m)=\frac {-16-m-m^2+\sqrt{m(-48+73m+10m^2+m^3)}}{4(m+2)}.
$$
This is easily seen to have the limit $1$ as $m \rightarrow \infty$. This suggests that for large $m$,
$$
\nu(\sigma_m+1)=\nu(2,3,3,4,5,\cdots, m-1, m+1)
$$
would be near zero. But the above translation identity shows that
\begin{equation}
\nu (\sigma_m +1)=-18
\label{nuci1}
\end{equation}
for $m\geq 5$. This also follows from the identity
$$
\left( \left(\frac {m(m+1)}2-1\right)+(m+2)+3\right)^2-\left( \frac {m(m+1)}2\right)^2-1+3^3+(m+2)^3=-18.
$$
This is reminiscent of the fact that for any $c\neq 0$ there are sequences of smooth functions $f_n(x)$ on $[0, 1]$ such that $\lim_{n\rightarrow \infty} f_n(x)$ exists, $\int_0^1 f_n(x)dx =0$ for every integer $n\geq 1$, yet $\int_0^1 \lim_{n\rightarrow \infty} f_n(x)dx =c$.
\vskip0.5cm
\par\noindent
\section{LIOUVILLE'S IDENTITY AND SEQUENCE MULTIPLICATION} \label{liou} The earliest non-trivial generalization of the Nicomachean identity is Liouville's
\begin{equation}
\left( \sum_{d\vert n}\tau(d)\right)^2 = \sum_{d\vert n }\tau(d)^3,
\label{lion}
\end{equation}
where $\tau(d)$ is the number of positive divisors of $n$. For $d=p^n$, $p$ a prime, this is simply the original identity. The basic principle here is that if two sequences $A=\{a_i\}_{i=1}^p$ and $B=\{b_j\}_{j=1}^q$ satisfy $\nu=0$, then so does the sequence product (``bag product'')
$$
A^{**}B=\{a_i b_j : \, 1\leq i \leq p, 1\leq j \leq q\}.
$$
We write the product as $A^{**n}$ if it is the product of $n$ copies of the sequence $A$. Then we observe that
\begin{equation}
\nu\left(A^{**n}\right)=\left(\sum_{i=1}^p a_i \right)^{2n}-\left(\sum_{i=1}^p {a_i}^3\right)^n=c \, \nu(A)
\label{nulen}
\end{equation}
for some constant $c$. For example
\begin{equation}
\nu\left(A^{**2}\right)=\left[\left(\sum_{i=1}^p a_i \right)^2+\sum_{i=1}^p {a_i}^3\right] \nu(A).
\nonumber
\end{equation}
We shall examine translations of such products.
\par
Let $n=p_1^{\alpha_1}\cdots p_r^{\alpha_r}$ be the standard prime factorization of $n$. Then the positive divisors of $n$ are of the form $p_1^{a_1}\cdots p_r^{a_r}$, where $0\leq a_i \leq \alpha_i$ for $1\leq i \leq r$, and so (\ref{lion}) implies that
\begin{equation}
\left(\sum_{\substack{0\leq a_i\leq \alpha_i \\1\leq i\leq r}} (a_1+1)(a_2+1)\cdots(a_r+1)\right)^2
=\sum_{\substack{0\leq a_i\leq \alpha_i \\1\leq i\leq r}} \left((a_1+1)(a_2+1)\cdots(a_r+1) \right)^3.
\nonumber
\end{equation}
Add $t$ to each entry of above and consider the squared sum minus the cube sum. Then a straightforward computation yields that
\begin{equation} \begin{split}
&\left(\sum_{\substack{0\leq a_i\leq \alpha_i \\1\leq i\leq r}} \left((a_1+1)(a_2+1)\cdots(a_r+1)+t \right)\right)^2-\sum_{\substack{0\leq a_i\leq \alpha_i \\1\leq i\leq r}} \left((a_1+1)(a_2+1)\cdots(a_r+1)+t \right)^3\\
=&-t \prod_{i=1}^r (\alpha_i+1) f_r(t),
\nonumber
\end{split}\end{equation}
where
\begin{equation} \begin{split}
f_r(t)&=t^2-\left( \prod_{i=1}^r (\alpha_i+1)-3\prod_{i=1}^r \left(\frac {\alpha_i}2+1\right)\right)t-\\
&\quad 2\prod_{i=1}^r (\alpha_i+1)\left(\frac {\alpha_i}2+1\right)+3\prod_{i=1}^r \frac 16 (\alpha_i+2)(2\alpha_i+3).
\label{ttvv45}
\end{split}\end{equation}
We shall consider the roots of the quadratic polynomial $f_r(t)$. This takes us beyond the set of integers. It is easily checked that the discriminant of $f_r(t)$ is positive, and we now create the sequence $t(r,m)$, where $t(r,m)$ is the right-most root of $f_r(t)$. Observe that for a sequence of numbers generated by successive values of a quadratic polynomial, the second difference is constant. The sequence $t(r,m)$ need not be a sequence of values of some polynomial, but it has the property that the second difference converges.
\vskip0.3cm
\begin{prop} For $r=2$ and $\alpha_1=\alpha_2=m-1$, the sequence of second differences of
\begin{equation}
t(2,m)=\frac 1{24}\left(3 (-3 - 6 m + m^2)+\sqrt{ 3(-1 + m)^2 (11 + 34 m + 35 m^2)}\right)
\nonumber
\end{equation}
converges to
\begin{equation}
\frac {3+\sqrt{105}}{12}
\label{conv24}
\end{equation}
and the smaller root converges to the algebraic conjugate of the above as $m \rightarrow \infty$. Hence for $A=\{1,2,3,\cdots,m\}$, the sequence of second differences of the right-most roots of $
\nu\left(A^{**2}+t\right)$ converges to (\ref{conv24}).
\end{prop}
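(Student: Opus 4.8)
The plan is to recognize $t(2,m)$ as the larger root of an explicit quadratic in $t$, extract its large-$m$ behaviour as a quadratic polynomial in $m$ plus a remainder tending to $0$, and then exploit the fact that the second-difference operator records only the leading (degree-two) coefficient while annihilating affine terms and killing the remainder. The guiding principle, already flagged before the statement, is that $t(2,m)$ is asymptotically the value-sequence of a quadratic, even though it is not literally one.

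First I would specialize formula (\ref{ttvv45}) to $r=2$, $\alpha_1=\alpha_2=m-1$. Writing $P:=\prod_{i=1}^2(\alpha_i+1)=m^2$ and evaluating the two remaining products, direct substitution turns $f_2(t)$ into a concrete quadratic $t^2-bt+c$ with
$$
b=\frac{m^2-6m-3}{4},\qquad c=\frac{(m+1)^2(-2m^2+4m+1)}{12}.
$$
A short computation gives the discriminant $b^2-4c=\frac{1}{48}(m-1)^2(35m^2+34m+11)$, so the right-most root $\tfrac12\bigl(b+\sqrt{b^2-4c}\bigr)$ is exactly the stated $t(2,m)$. Since the divisors of $p_1^{m-1}p_2^{m-1}$ carry $\tau$-values $(a_1+1)(a_2+1)$ ranging over $\{ij:1\le i,j\le m\}$, this sequence is precisely the bag product $A^{**2}$ for $A=\{1,2,\dots,m\}$; hence $\nu(A^{**2}+t)=-t\,m^2 f_2(t)$, whose roots are $0$ and the two roots of $f_2$. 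As $t(2,m)>0$ for large $m$, it is the right-most root of $\nu(A^{**2}+t)$, which will give the final ``Hence'' assertion once the limit is in hand.

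Next I would extract the asymptotics. In the stated formula $24\,t(2,m)=3m^2-18m-9+(m-1)\sqrt{105m^2+102m+33}$, using $3(m-1)^2(35m^2+34m+11)=(m-1)^2(105m^2+102m+33)$. Expanding the square root in powers of $1/m$,
$$
\sqrt{105m^2+102m+33}=\sqrt{105}\,m+c_0+\frac{c_1}{m}+O(1/m^2),
$$
multiplying by $(m-1)$ and dividing by $24$ yields
$$
t(2,m)=\frac{3+\sqrt{105}}{24}\,m^2+Bm+C+R(m),\qquad R(m)=O(1/m),
$$
for constants $B,C$ whose exact values are irrelevant. The only facts that matter are that the leading coefficient is $A:=\frac{3+\sqrt{105}}{24}$ and that the non-polynomial remainder $R(m)$ tends to $0$. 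Applying $\Delta^2 h(m):=h(m+2)-2h(m+1)+h(m)$, which annihilates affine functions and sends $Am^2$ to the constant $2A$, while $\Delta^2 R(m)=O(1/m)\to0$, gives $\Delta^2 t(2,m)\to 2A=\frac{3+\sqrt{105}}{12}$, which is (\ref{conv24}). Replacing the $+$ before the square root by $-$ produces the smaller root, with leading coefficient $\frac{3-\sqrt{105}}{24}$; the identical argument shows its second differences converge to $\frac{3-\sqrt{105}}{12}$, the algebraic conjugate of (\ref{conv24}).

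The substitution into (\ref{ttvv45}) and the square-root expansion are routine bookkeeping. The one step demanding genuine care is the last: the proposition concerns a sequence that is \emph{not} the value-sequence of any polynomial, so I must justify that the deviation of $t(2,m)$ from its quadratic model makes no contribution in the limit. This reduces to the elementary estimate $\Delta^2 R(m)\to0$ for $R(m)=O(1/m)$, which I expect to be the crux; everything else rests on the exact quadratic part that $\Delta^2$ collapses to the single constant $2A$.
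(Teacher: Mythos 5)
Your proposal is correct, and it takes exactly the route the paper intends: the paper states this proposition without an explicit proof, offering only the quadratic $f_r(t)$ of (\ref{ttvv45}) and the observation that value-sequences of quadratic polynomials have constant second differences, so your argument is precisely the rigorous implementation of that sketch. Your computations all check out --- the specialization $b=\frac{m^2-6m-3}{4}$, $c=\frac{(m+1)^2(-2m^2+4m+1)}{12}$, the discriminant $\frac{1}{48}(m-1)^2(35m^2+34m+11)$, the identification of the divisor multiset of $p_1^{m-1}p_2^{m-1}$ with $A^{**2}$ so that $\nu(A^{**2}+t)=-tm^2f_2(t)$, and the key step that $\Delta^2$ sends $\frac{3+\sqrt{105}}{24}m^2$ to $\frac{3+\sqrt{105}}{12}$ while annihilating the affine part and the $O(1/m)$ remainder.
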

\vskip0.3cm
\par\noindent
\begin{rmk} If $\alpha_1=\alpha_2=\cdots=\alpha_r=k$ for $r\geq 1$, then the $r$th difference of the sequence of the right-most roots seems to converge to a constant as $k\rightarrow \infty$. Those constants are
\begin{equation} \begin{split}
&-\frac 12, \,\, \,\frac{1}{12} \left(3+\sqrt{105}\right),\,\, \,\frac{1}{8} \left(15+\sqrt{545}\right),\,\, \,\frac{1}{12} \left(117+\sqrt{20985}\right),\,\, \,\\
   &\frac{5}{24}\left(261+\sqrt{84761}\right),\,\, \,
\frac{5}{24} \left(1647+\sqrt{3036705}\right)
\nonumber
\end{split}\end{equation}
for $r=1,2,\cdots, 6$, respectively.
\end{rmk}
\vskip0.5cm
\par\noindent
\section{THE $A^{**}B$ CASE, $B$ AN $A$-TRUNCATION} \label{thea}
Consider the case $r=2$ and $\alpha_1=m-1$, $\alpha_2=r-1$ with $r<n$. Then $f_r(t)$ in (\ref{ttvv45}) equals
$$
f_r(t)=\frac 1{12}\left( (m+1) (r+1) (2 m r-2 n-2 r-1)+3 (m r-3 m-3 r-3)t-12t^2\right),
$$
i.e., $\nu(A^{**}B+t)=-mrt f_r(t)$, where $A=\{1,2,3,\cdots,m\}$ and $B=\{1,2,3,\cdots,r\}$. As before, create the sequence of the right-most roots of the above quadratic. Here the first difference of the sequence converges to
\begin{equation}
\frac{1}{24} \left(3 r+\sqrt{3} \sqrt{(5 r+1) (7 r-5)}-9\right).
\label{fds3}
\end{equation}
as $m \rightarrow \infty$. For examples, if $r=2, 3, 4$, (\ref{fds3}) becomes
$$
\frac{1}{8} \left(\sqrt{33}-1\right)=0.593070\cdots, \,\,\, \frac{2}{\sqrt{3}}=1.15470\cdots, \,\,\, \frac{1}{8} \left(1+\sqrt{161}\right)=1.71107\cdots.
$$
These have minimal polynomials
$$
-2 + x + 4 x^2, \,\,\, -4 + 3 x^2, \,\,\, -10 - x + 4 x^2,
$$
respectively. We now rescale the minimal polynomials of the various limiting values of the first differences. For example, replace $-4+3x^2$ and $-10-x+4x^2$ by
$$
-8+6x^2 \quad \text{and}\quad -20-2x+8x^2.
$$
By doing this to get successive even coefficients for the $x^2$ term, we find a general formula:
\begin{equation}
-\frac {m(m+1)(m+2)}3-\frac 12 (m-2)(m+1)x+2(m+1)x^2.
\label{761w}
\end{equation}
This (\ref{761w}) has rather nice discriminants:
\begin{equation}\begin{split}
\Delta_x &= \frac 1{12} (m+1)^2(6+5m)(2+7m),\\
\Delta_m &= \frac 1{6^4} (2+9x+12x^2)^2(16+72x+105x^2).
\nonumber\end{split}\end{equation}
Here the first difference of the sequence of the right-most roots of (\ref{761w}) converges to
$$
0.551956\cdots=\frac {3+\sqrt{105}}{24}
$$
that is same as (\ref{conv24}) divided by $2$. This seems to not be a coincidence.
\vskip0.5cm
\par\noindent
\section{FRACTIONS, LARGE DENOMINATORS, AND GAPS} \label{frac}
By translation of subsets we can find additional solutions to $\nu(\sigma)=0$ with the elements of $\sigma$ being positive rational numbers not all integers. Here are two examples involving arithmetic progressions modulo $7$. For $\sigma^*$ a subset of $\sigma$, let $\sigma / \sigma^*$ be the set obtained from $\sigma$ by removing all elements of $\sigma^*$. Write
$$
\{\sigma; \sigma^* \rightarrow \sigma^*+t\}=(\sigma / \sigma^*)\cup (\sigma^*+t).
$$
If $\sigma=\{1,2,\cdots, 67\}$ and $\sigma^*=\{3, 10, 17, \cdots, 66\}$ we have
$$
\nu(\sigma; \sigma^* \rightarrow \sigma^*+t)=-5t(t+91)(2t+5)
$$
and $t= -5/2$ gives an example. For $\sigma=\{1,2,\cdots, 19\}$ and $\sigma^*=\{6, 13\}$, we have a shorter example. Here
$$
\nu(\sigma; \sigma^* \rightarrow \sigma^*+t)=-t(t+29)(2t-5)
$$
and $t= 5/2$ gives
\begin{equation}
\tau=\left\{1,2,3,4,5,7,8,\frac {17}2, 9,10,11,12,14,15, \frac {31}2, 16,17,18,19\right\}
\label{ta999}
\end{equation}
with $\nu(\tau)=0$.
\vskip0.3cm
\begin{prop} There exist a sequence $\sigma$ with positive rational elements and non-integers $p/q$, with $(p, q)=1$ and $q$ arbitrarily large, such that $\nu(\sigma)=0$.
\end{prop}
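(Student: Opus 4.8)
The plan is to manufacture the required sequences by the subset--translation device of this section, pushing the denominator of the translation parameter to infinity. First I would record the general identity behind the examples leading to (\ref{ta999}): if $\nu(\sigma)=0$, if $\sigma^\ast\subseteq\sigma$ has exactly $k$ elements, and if we abbreviate $S=\sum_{a\in\sigma}a$, $s^\ast=\sum_{a\in\sigma^\ast}a$ and $P=\sum_{a\in\sigma^\ast}a^2$, then expanding the square and the cube sum and using $S^2=\sum_{a\in\sigma}a^3$ gives
$$
\nu(\sigma;\sigma^\ast\to\sigma^\ast+t)=t\bigl(-k\,t^2+(k^2-3s^\ast)\,t+(2Sk-3P)\bigr).
$$
Writing $A=k^2-3s^\ast$ and $B=2Sk-3P$, the nonzero roots are $\bigl(A\pm\sqrt{A^2+4kB}\bigr)/(2k)$, so a nonzero \emph{rational} root in lowest terms $p/q$ forces $A^2+4kB$ to be a perfect square and $q\mid 2k$. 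Hence large denominators can only come from translating large subsets, and the task is to choose $\sigma,\sigma^\ast$ so that the reduced denominator is genuinely large.

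Rather than search for parameters making $A^2+4kB$ a square, I would impose the root in advance. Fix a prime $k>3$ and demand that some $t=p/k$ with $0<p<k$ and $\gcd(p,k)=1$ be a root. Substituting and clearing denominators gives $-p^2+Ap+Bk=0$, so $B=p(p-A)/k$; since $\gcd(p,k)=1$ this is an integer exactly when $p\equiv A\equiv-3s^\ast\pmod k$, which one simply uses to define $p$. The value $B$ then forces the total sum through $S=(B+3P)/(2k)$. Two features make this choice attractive: the root $t=p/k$ lies in $(0,1)$, so translating the positive integers of $\sigma^\ast$ leaves every entry positive, and the denominator is exactly $k$, which is as large as we please.

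It remains to realise the prescribed data by an honest positive-integer sequence, i.e.\ to find $\sigma\supseteq\sigma^\ast$ with $\sum_{a\in\sigma}a=S$ and $\sum_{a\in\sigma}a^3=S^2$. Fixing the subset, say $\sigma^\ast=\{1,2,\dots,k-1,M\}$ with $M$ large and in a convenient residue class mod $k$, this reduces to choosing the complementary multiset $\rho=\sigma\setminus\sigma^\ast$ with
$$
\sum_{a\in\rho}a=S-s^\ast\qquad\text{and}\qquad\sum_{a\in\rho}a^3=S^2-\sum_{a\in\sigma^\ast}a^3.
$$
A short computation shows that for large $M$ both right-hand sides are positive and the cube-sum target dominates the sum target, so a greedy selection---first a few large elements to absorb the cube sum, then copies of $1$ to pin down the sum---produces such a $\rho$, after small corrections to meet the two targets exactly.

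The main obstacle is exactly this last step: the simultaneous, \emph{exact} representation of a positive integer pair as $\bigl(\sum a,\sum a^3\bigr)$ over a multiset of positive integers, carried out uniformly as the parameters grow. This is a two-moment Waring-type problem, and dovetailing it with the parity and residue conditions that keep $S$ integral and preserve $\gcd(p,k)=1$ is where the real work lies; once it is handled for large $M$, letting $k$ range over primes yields sequences with $\nu=0$ containing non-integers $p/q$, $(p,q)=1$, of denominator $q=k\to\infty$. If instead one declines to prescribe the denominator and keeps $A^2+4kB$ general, the perfect-square condition becomes a genuine Pell equation, tying this construction back to the indefinite quadratic forms highlighted in Section~\ref{intro} and Section~\ref{trans}.
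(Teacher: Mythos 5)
Your opening identity is correct and worth keeping: translating a $k$-element subset $\sigma^\ast$ of a $\nu=0$ sequence gives $\nu=t\bigl(-kt^2+(k^2-3s^\ast)t+(2Sk-3P)\bigr)$, which indeed reproduces the two examples at the start of the section. But your argument never closes. The crux of your construction --- producing a multiset $\rho$ of positive integers with prescribed sum $S-s^\ast$ and prescribed cube sum $S^2-\sum_{a\in\sigma^\ast}a^3$ --- is itself an existence statement of exactly the kind the proposition demands, and you leave it unproved, explicitly calling it ``where the real work lies.'' It cannot be waved at with ``a greedy selection \ldots\ after small corrections'': for \emph{any} multiset of positive integers one has $\sum a^3\equiv\sum a\pmod 6$, since $a^3-a=(a-1)a(a+1)$ is divisible by $6$; so your two targets must satisfy a congruence you never arrange, and if it fails, no correction by copies of $1$ (which shift both targets equally) or by anything else can succeed. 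Since the completing multiset $\rho$ is never shown to exist, no sequence with $\nu(\sigma)=0$ is actually exhibited, and the proposition is not proved.

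The gap is avoidable, and the paper's own proof shows how cheaply: it takes the single explicit sequence $\tau$ in (\ref{ta999}), which has the half-integer entries $17/2$ and $31/2$ and satisfies $\nu(\tau)=0$, and iterates the bag product $w\rightarrow\tau^{**}w$. Under this product sums multiply by $195$ and cube sums by $195^2=38025$, so $\nu(\tau^{**(n+1)})=195^2\,\nu(\tau^{**n})=0$ for every $n$, while the element $(17/2)^{n+1}=17^{n+1}/2^{n+1}$ is already in lowest terms with denominator $2^{n+1}\rightarrow\infty$. In other words, arbitrarily large denominators come for free from multiplying one known example by itself --- no translation, no Pell or perfect-square condition, and no two-moment realization problem. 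Your framework also builds in an unnecessary ceiling: a translated element $a+t$ has denominator dividing $2k$, so you are forced to grow the subset $\sigma^\ast$ with $q$, whereas the multiplicative route grows denominators while reusing a fixed sequence. If you wish to salvage your approach, the statement that genuinely remains to be proved is the exact representation of a pair (sum, cube sum) by a positive-integer multiset subject to the mod-$6$ and residue constraints; otherwise, the bag-product trick disposes of the proposition in three lines.
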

\begin{proof} Iterate $w\rightarrow \tau^{**}w$ with initial condition $w=\tau$ in (\ref{ta999}). Then the $n$th iteration has a positive rational element $p/2^{n+1}$ where $p$ is odd, e.g., $p=17^{n+1}$. Next
$$
\nu(\tau^{**(n+1)})=195^2 \nu(\tau^{**n}),
$$
where $195$ is the sum of all elements of the sequence $\tau$. In fact, the sum of all elements of $\tau^{**(n+1)}$ is the sum of all elements of $\tau^{**n}$ multiplied by $195$ and the sum of cubes of all elements of $\tau^{**(n+1)}$ is the sum of cubes of all elements of $\tau^{**n}$ by $38025=195^2$, where $38025$ is the sum of cubes of all elements of $\tau$. Since $\nu(\tau)=0$, the result follows.
\end{proof}
\vskip0.3cm
\par
How large is the gap of the elements of the sequence with positive integer entries such that $\nu=0$?
By iterating $w\rightarrow \{1,2\}^{**}w$ with initial condition $w=\{1, 2\}$, we produce
sequences having the largest gaps that are halves of their lengths. The proof of Proposition~\ref{ppals0} demonstrates this. There is a sequence that have the largest gap exceeding half of its length. An example is
$$
\{ 6 , 6 , 7 , 7 , 8 , 8 , 8 , 9 , 9 , 10 , 10 , 17 \}.
$$
So if they exist, there seems to be three types of sequences with positive integer entries and $\nu=0$ concerning the largest gaps among their elements:
\par\noindent
(1) for any $\epsilon>0$, the sequences where the ratio of the largest gap to the length is greater than $1 - \epsilon$,
\par\noindent
(2) the sequences of independent interest that have size-able the largest gaps,
\par\noindent
(3) the sequences having the property that the gap between the two largest integers can be arbitrarily large.
\par
There exist sequences that satisfy the case (1) mentioned above.
\vskip0.3cm
\par\noindent
\begin{prop} For any $\epsilon>0$, there is a sequences with only two positive integer items such that
$$
\frac {\text{the largest gap}}{\text{length}}>  1 - \epsilon.
$$
\end{prop}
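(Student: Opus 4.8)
The plan is to prove the statement by exhibiting an explicit one–parameter family of sequences that use exactly two distinct positive integers and whose ratio (largest gap)$/$(length) tends to $1$. A sequence consisting of a value $a$ repeated $p$ times together with a single larger value $b>a$ has largest gap $b-a$ and length $p+1$, so it suffices to produce such sequences, each satisfying $\nu=0$, with $(b-a)/(p+1)\to 1$.

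First I would write down the defining relation. For the multiset $\{\,\underbrace{a,\dots,a}_{p},\,b\,\}$ the condition $\nu=0$ reads
\[
(pa+b)^2=pa^3+b^3 .
\]
I would then take $b=a^2+a=a(a+1)$ and $p=a^2+a-1$. The crucial step is the direct verification that, for these choices, both sides of the displayed equation equal $a^4(a+2)^2$; hence $\nu=0$ for every integer $a\ge 1$. (Equivalently, the sum is $a^3+2a^2=a^2(a+2)$ while the sum of cubes is $a^4(a+2)^2$.) With the family in hand the conclusion is immediate: the two distinct values are $a$ and $a^2+a$, the largest gap is $(a^2+a)-a=a^2$, and the length is $p+1=a^2+a$, so
\[
\frac{\text{largest gap}}{\text{length}}=\frac{a^2}{a^2+a}=\frac{a}{a+1}.
\]
Given $\epsilon>0$, any integer $a\ge 1/\epsilon$ makes this exceed $1-\epsilon$, which proves the proposition.

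I expect the only real difficulty to be locating the correct asymptotic regime rather than carrying out any delicate estimate. The natural first attempt, normalising the smaller value to $a=1$ (so that the sequence is $\{1,\dots,1,b\}$), is a dead end: the relation above becomes the demand that $4b^3-4b+1$ be a perfect square, a sparse elliptic–type condition whose small solutions $b=2,6$ give ratio exactly $1/2$, in line with the iterates of $\{1,2\}$ noted before the proposition. The key realisation is instead to let $a\to\infty$ with $b$ of order $a^2$. This is motivated by starting from the case (1) identity $\nu(b,b,\dots,b)=0$ ($b$ copies of $b$, hence length $b$) and replacing all but one copy of $b$ by copies of a smaller value $a$; imposing $\nu=0$ on the resulting length-$b$ sequence forces $p(b-a)=b^2-ab-a^2$, and the divisibility this requires is met automatically by the choice $b-a=a^2$ — precisely the parametrisation used above. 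Once this regime is identified, both the identity and the limit are routine.
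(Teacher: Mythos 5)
Your proposal is correct, and at the level of strategy it matches the paper's proof: both arguments exhibit an explicit one-parameter family of two-valued sequences, verify $\nu=0$ by a short algebraic identity, and let the parameter grow so the gap-to-length ratio tends to $1$. The explicit families are genuinely different, however. The paper joins $n$ copies of $n$ to $n^2-2$ copies of $n^2$ (sum $n^4-n^2$, cube sum $(n^4-n^2)^2$, ratio $(n^2-n)/(n^2+n-2)$), so its large value occurs with high multiplicity and the construction reads as the case (1) sequence $\{n,\dots,n\}$ of the introduction glued to a truncated constant block. Your family --- $a^2+a-1$ copies of $a$ plus a single copy of $a^2+a$, with sum $a^2(a+2)$ and cube sum $a^4(a+2)^2$ --- places the large element exactly once, gives the marginally cleaner ratio $a/(a+1)$ with the explicit choice $a\geq 1/\epsilon$, and comes with a pleasant extra fact the paper does not record: for a sequence of length $b$ consisting of $b-1$ copies of $a$ and one copy of $b$, the condition $\nu=0$ forces $b=a^2+a$ (apart from the degenerate $b=a$), so your family is the unique one of its shape. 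Your side remark about the $a=1$ regime (the perfect-square condition on $4b^3-4b+1$, with solutions $b=2,6$ giving ratio exactly $1/2$) is also correct and explains why the iterates of $\{1,2\}$ discussed before the proposition cannot do better than $1/2$. Both verifications are equally elementary; the only substantive difference is which two-valued family one writes down.
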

\begin{proof} Consider the sequence $\{ n , n , n , \cdots, n \}$ with length $n$. Also consider
$$
\{ n ^ 2 , n ^ 2 , \cdots, n ^ 2 \}
$$
with length $n^2 - 2$. Joining these sequences results in a sequence with length $n^2 + n - 2$, and the largest gap in this sequence is $n ^ 2 - n$. Therefore, the ratio of the largest gap to the length is given by
$$
\frac {n ^ 2 - n}{ n ^ 2 + n - 2}
$$
which tends to $1$ as $n \rightarrow \infty$. However $\nu = 0$ for this sequence. In fact, the sum of the elements in the sequence is given by
$$
n ^ 2 + ( n ^ 2 - 2 ) n ^ 2 =  n ^ 4 - n ^ 2 ,
$$
while the sum of the cubes of the elements is $n ^ 4 + ( n ^ 2 - 2 ) n ^ 6 = n ^ 8 - 2 n ^ 6 + n ^ 4$. Since the latter is the square of the former, the result follows.
\end{proof}
\vskip0.3cm
\par
We provide an example related to the sequences mentioned in case (2). Consider the sequence:
$$
 \{ 1 , 2 , 3 , \cdots, 94,  94+1, m, m, \cdots, m \},
 $$
 where $m$ is repeated $2\cdot 94$ times, and we choose $m=228$. This sequence satisfies $\nu=0$, and the largest gap in this sequence is $228-95=133$. The general pattern for such sequences is given by:
\begin{equation}
 \{ 1 , 2 , 3 , \cdots, j,  j+1, m, m, \cdots, m \},
 \label{mjsq000}
\end{equation}
 where positive integer $m$ is repeated $2j$ times, for appropriate $j$ and $m$. This implies that
\begin{equation}
m=j + \sqrt{2j^2 + 3 j + 2},
\nonumber
\end{equation}
and it is related to the Pell-type equation
$$
              2 x ^ 2 + 3 x + 2 = r ^ 2.
$$
It is interesting that a positive integer $r$ satisfying the above equation is connected to the coefficients of certain Chebyshev polynomials. References for this can be found in \cite{Ra} and in the OEIS integer sequences A056161 and A055979. By elementary methods, we can obtain the following results. We omit the proof for brevity.
\vskip0.3cm
\par\noindent
\begin{prop}  If the sequence (\ref{mjsq000}) satisfies $\nu=0$, then the $j$'s are generated by
$$
\frac {x ^ 4 + x ^ 3 - 19 x ^ 2 - 5 x - 2}{( x - 1 ) ( 1 - 6 x + x ^ 2 ) ( 1 + 6 x + x ^ 2 )},
$$
and the corresponding $m$'s are generated by
$$
\frac {6 ( 1 + x ) ^ 2}{( 1-x ) ( 1 - 6 x + x ^ 2 ) ( 1 + 6 x + x ^ 2 )}.
$$
\end{prop}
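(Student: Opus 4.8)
The plan is to convert $\nu=0$ for the sequence (\ref{mjsq000}) into a single Pell equation, determine its admissible solutions, extract the linear recurrence those solutions obey, and then read off the two generating functions as rational functions with the stated denominator.

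First I would use the relation already recorded above: for (\ref{mjsq000}), $\nu=0$ forces $r:=m-j$ to satisfy $r^2=2j^2+3j+2$. Multiplying by $8$ and setting $s=4j+3$ turns this into
$$s^2-8r^2=-7,\qquad s=4j+3,\quad r=m-j.$$
Thus admissible pairs $(j,m)$ correspond exactly to positive integer solutions $(s,r)$ of $s^2-8r^2=-7$ with $s\equiv3\pmod4$ (so that $j=(s-3)/4\in\mathbb{Z}_{>0}$) and $r\ge1$. The fundamental unit of norm $1$ is $3+\sqrt8$, and I would invoke the standard description of Pell solutions: the positive solutions of $s^2-8r^2=-7$ split into two orbits under multiplication by $3+\sqrt8$, with least members $(1,1)$ and $(5,2)$. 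Since this multiplication sends $s\mapsto 3s+8r\equiv -s\pmod4$, it toggles $s$ modulo $4$; hence the admissible solutions are precisely the alternate members picked out by the squared map $T\colon(s,r)\mapsto(17s+48r,\,6s+17r)$, forming two orbits with base points $(11,4)$ and $(31,11)$, i.e. $(j,m)=(2,6)$ and $(7,18)$.

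Next I would establish the recurrence. The map $T$ is multiplication by $(3+\sqrt8)^2$, a matrix of trace $34$ and determinant $1$, so within each orbit the $r$-values satisfy $r_{k+1}=34r_k-r_{k-1}$, while the $j$-values satisfy the same recurrence with an added constant $24$; consequently both $j$ and $m=j+r$ obey $u_{k+1}=34u_k-u_{k-1}+24$, whose fixed point is $u^{\ast}=-3/4$. Subtracting $u^{\ast}$ homogenizes each orbit to the degree-two characteristic polynomial $x^2-34x+1$, whose roots are $17\pm6\sqrt8=(3\pm\sqrt8)^2$. Listing all admissible solutions in increasing order of $j$ simply interleaves the two orbits, $2,7,94,263,3218,8959,\dots$; and interleaving two sequences that each obey a recurrence with characteristic polynomial $p(x)$ produces a sequence obeying $p(x^2)$. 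Applying this to the homogenized sequences gives characteristic polynomial $x^4-34x^2+1$, and restoring the constant shift multiplies in the factor $(x-1)$, so the sorted $j$- and $m$-sequences both satisfy the recurrence with characteristic polynomial
$$(x-1)(x^4-34x^2+1)=(x-1)(1-6x+x^2)(1+6x+x^2).$$

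Finally, a sequence obeying a linear recurrence with this characteristic polynomial has an ordinary generating function equal to a rational function whose denominator is the reciprocal of that polynomial, and here the polynomial is, up to sign, anti-palindromic, so the denominator may be taken to be exactly $(x-1)(1-6x+x^2)(1+6x+x^2)$. The numerators are then pinned down by matching the first five coefficients, namely $2,7,94,263,3218$ for the $j$-sequence and $6,18,228,636,7770$ for the $m$-sequence; carrying out that match yields precisely the two numerators in the statement. I expect the main obstacle to be the bookkeeping in the middle step: verifying that the two orbits exhaust all positive solutions of $s^2-8r^2=-7$, that the congruence $s\equiv3\pmod4$ selects exactly the alternate members of each orbit, and that sorting by size genuinely interleaves the two orbits, so that no admissible solution is skipped or double-counted. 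Once this is secured, the passage from the recurrence to the generating functions is routine.
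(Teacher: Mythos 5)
Your proposal is correct, and it is essentially the proof the paper intends but omits ("we omit the proof for brevity"): the paper's preceding paragraph already reduces $\nu=0$ to $m=j+\sqrt{2j^2+3j+2}$ and the Pell-type equation $2x^2+3x+2=r^2$, and your argument — rewriting this as $s^2-8r^2=-7$ with $s=4j+3$, identifying the two solution orbits under $3+\sqrt{8}$, extracting the fifth-order recurrence with characteristic polynomial $(x-1)(x^2-6x+1)(x^2+6x+1)$, and matching numerators — is exactly that elementary route carried to completion. I checked the numerics: the admissible pairs $(j,m)=(2,6),(7,18),(94,228),(263,636),(3218,7770),(8959,21630)$ agree with the power-series expansions of both stated rational functions, so the final numerator match does yield $x^4+x^3-19x^2-5x-2$ and $6(1+x)^2$ as claimed.
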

\vskip0.3cm
\par\noindent
Finally, we give a constructive proof about the existence of the third type of sequences.
\vskip0.3cm
\begin{prop} There are sequences with positive integer entries in which the gap between the largest two integers can be arbitrarily large and $\nu = 0$.
\label{ppals0}
\end{prop}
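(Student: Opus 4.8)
The plan is to exhibit an explicit one-parameter family of admissible sequences and simply read off the gap, rather than to search for sequences one at a time. The family is precisely the one underlying identity~(\ref{11id}): for an integer parameter $n$ I set
$$
\sigma_n=\{1,2,3,\dots,7n-12,\ 8n-12,\ 3n-3\},
$$
so that $\sigma_n$ is the full run $1,2,\dots,7n-12$ together with the two adjoined entries $8n-12$ and $3n-3$. First I would fix a threshold, say $n\ge 3$, guaranteeing both that every entry is a positive integer and that $3n-3<7n-12<8n-12$; this ordering is what later lets me locate the gap among the two largest entries (for smaller $n$ the entry $3n-3$ can overtake $7n-12$, so the threshold matters).

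Next I would establish $\nu(\sigma_n)=0$, which is the content of~(\ref{11id}); to keep the argument self-contained I would verify it directly. Writing $S=\sum_{j=1}^{7n-12}j$ and using the classical Nicomachus identity $\sum_{j=1}^{N}j^3=S^2$ to absorb the run's contribution to the cube sum, the equation $\nu(\sigma_n)=0$ reduces to
$$
2S(A+B)+(A+B)^2=A^3+B^3,\qquad A=8n-12,\quad B=3n-3.
$$
Factoring the right-hand side as $(A+B)(A^2-AB+B^2)$ and collecting the left-hand side as $(A+B)\bigl(2S+(A+B)\bigr)$, I divide by the nonzero factor $A+B=11n-15$ to leave the scalar identity $2S+(A+B)=A^2-AB+B^2$. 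Both sides are quadratics in $n$, and a one-line computation shows each equals $49n^2-150n+117$, which finishes the verification.

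Finally, with the ordering fixed above, the two largest entries of $\sigma_n$ are $8n-12$ and $7n-12$, whose difference is exactly $n$. Letting $n\to\infty$ therefore makes the gap between the two largest integers arbitrarily large while $\nu(\sigma_n)=0$ holds throughout, which is the assertion. I expect no serious obstacle once~(\ref{11id}) is in hand: the genuinely creative step, namely finding a family whose two top terms spread apart linearly in the parameter, is already supplied by~(\ref{11id}), and the only thing that must really be checked is the ordering of the adjoined entries—specifically that $3n-3$ stays strictly below $7n-12$ so that it neither becomes one of the two largest terms nor disturbs the structure of the run. That is settled by the single inequality $3n-3<7n-12$, valid for $n\ge 3$, and the remainder is routine bookkeeping.
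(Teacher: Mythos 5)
Your proof is correct, but it takes a genuinely different route from the paper. The paper proves this proposition by iterating the bag product: starting from $w=\{1,2\}$ and repeatedly forming $\{1,2\}^{**}w$, it observes that $\{1,2\}^{**(n+1)}$ is the join of $\{1,2\}^{**n}$ and $2\{1,2\}^{**n}$, so its two largest entries are $2^n$ and $2^{n+1}$ (gap $2^n$), and that $\nu$ is preserved because the element sum multiplies by $3$ and the cube sum by $3^2$ at each step. You instead use the explicit linear family behind identity~(\ref{11id}), $\sigma_n=\{1,2,\dots,7n-12,\,8n-12,\,3n-3\}$, verify $\nu(\sigma_n)=0$ by reducing (via the classical identity and the factor $A+B=11n-15$) to the scalar identity $2S+A+B=A^2-AB+B^2=49n^2-150n+117$, and read off the gap $n$ between the top two entries $8n-12$ and $7n-12$; your threshold $n\ge 3$ correctly ensures $3n-3<7n-12$, and the arithmetic checks out. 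What each approach buys: the paper's construction does double duty in its surrounding discussion --- its gap $2^n$ is exactly half the sequence length $2^{n+1}$, which is the point made just before the proposition about gap-to-length ratios --- whereas your family has gap only about $1/7$ of the length, so it would not serve that secondary purpose; on the other hand, your argument is a direct, induction-free polynomial verification, gives sequences of polynomial rather than exponential length for a given gap, and makes the paper's identity~(\ref{11id}) self-contained rather than relying on the multiplicativity property~(\ref{nulen}).
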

\begin{proof} Iterate $w\rightarrow \{1,2\}^{**}w$ with initial condition $w=\{1, 2\}$. Then we see that $\{1,2\}^{**(n+1)}$ is the join of $\{1,2\}^{**n}$ and $2\{1,2\}^{**n}$. So the sequence $\{1,2\}^{**(n+1)}$ ends in $2^n$ and $2^{n+1}$ in increasing order and has length $2^{n+1}$ while $2^{n+1}-2^n=2^n$. Now $\nu(1,2)=0$ and
$$
\nu\left(\{1,2\}^{**(n+1)}\right)=3^2 \nu \left(\{1,2\}^{**n}\right)
$$
since the sum of all elements of $\{1,2\}^{**(n+1)}$ is the sum of all elements of $\{1,2\}^{**n}$ multiplied by $3$ and
the sum of cubes of all elements of $\{1,2\}^{**(n+1)}$ is the sum of cubes of all elements of $\{1,2\}^{**n}$ by $3^2(=1^3+2^3)$. Since $\nu\left(\{1,2\}^{**n}\right)=0$, the result follows.
\end{proof}
\vskip0.3cm
\par
Finally we introduce a sequence containing some interesting irrational numbers with $\nu=0$.
\vskip0.3cm
\par\noindent
\begin{prop} There exist a sequence $\sigma$ with positive algebraic integers from $\Bbb Q(\sqrt 5)$ such that more than half of $\sigma$ are irrational and $\nu(\sigma)=0$. Let $\phi$ be the golden ratio. Then given $N > 0$, there exists such a sequence
containing all of the numbers $(1/\phi)^n$ for $1\leq n \leq N$.
\end{prop}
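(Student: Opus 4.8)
The plan is to realize $\sigma$ as a multiset of powers $\beta^{k}:=(1/\phi)^{k}$ together with a few auxiliary powers of $\phi$, and to reduce the condition $\nu(\sigma)=0$ to an elementary Diophantine bookkeeping in the ring $\mathbb{Z}[\phi]$. Recall that $\beta=1/\phi$ satisfies $\beta^{2}=1-\beta$, so $\{1,\beta\}$ is a $\mathbb{Z}$-basis of $\mathbb{Z}[\phi]$; every $\beta^{k}$ and every $\phi^{k}$ is a positive algebraic integer of $\mathbb{Q}(\sqrt5)$, and each is irrational for $k\ge 1$ (its $\beta$-coordinate is a nonzero Fibonacci number). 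First I would record the closed forms $\beta^{k}=A_{k}+B_{k}\beta$ and $\phi^{k}=F_{k+1}+F_{k}\beta$ in this basis. For any finite multiset $\sigma\subset\mathbb{Z}[\phi]$ write its element-sum as $p+q\beta$ and its cube-sum as $r+t\beta$. Expanding $(p+q\beta)^{2}=(p^{2}+q^{2})+(2pq-q^{2})\beta$ and comparing coordinates shows that
\[
\nu(\sigma)=0 \quad\Longleftrightarrow\quad r=p^{2}+q^{2}\ \text{ and }\ t=2pq-q^{2}.
\]
Thus the whole problem becomes: build a multiset containing $\beta,\beta^{2},\dots,\beta^{N}$ whose four integer coordinates $(p,q,r,t)$ satisfy these two relations, while keeping every element positive and the irrational elements in the majority.

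The device that makes the coordinates controllable is the \emph{conjugate pair} $\{\phi^{2i},\beta^{2i}\}$: both members are positive and irrational, yet their sum is the Lucas number $L_{2i}$ and their cube-sum is $L_{6i}$, so adjoining such a pair changes only the rational coordinates $(p,r)$, by $(L_{2i},L_{6i})$, and leaves $(q,t)$ fixed. Single powers $\beta^{k}$ and $\phi^{k}$, by contrast, move all four coordinates in a Fibonacci-prescribed way. The construction then proceeds in two stages, starting from the forced block $B_{N}=\{\beta,\beta^{2},\dots,\beta^{N}\}$. In the first stage I adjoin a bounded number of further single powers (small even powers $\beta^{2i}$, which carry negative $\beta$-coordinates, balanced against $\phi,\phi^{2},\phi^{4},\dots$, which carry positive ones) so as to annihilate both $\beta$-coordinates, i.e. to reach $q=0$ and $t=0$. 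After this stage the sum $p$ and the cube-sum $r$ are ordinary integers and $\nu=p^{2}-r$. In the second stage I adjoin conjugate pairs $\{\phi^{2i},\beta^{2i}\}$ (and, if unavoidable, a few genuine positive integers) to push $(p,r)$ until $r=p^{2}$, which is exactly $\nu(\sigma)=0$.

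Once both stages close, the verification is immediate: every element is a positive algebraic integer of $\mathbb{Q}(\sqrt5)$; the block $B_{N}$ supplies all the numbers $(1/\phi)^{n}$, $1\le n\le N$; and since $B_{N}$ together with all adjoined powers and conjugate pairs consists entirely of irrationals, while at most a bounded number of rational integers are ever used, the irrational elements form a strict majority. The hard part is the exact solvability of the two integer-matching stages. Stage one is a $\mathbb{Z}$-linear feasibility problem: I must hit the prescribed pair of $\beta$-residues by a nonnegative combination of the coordinate vectors of the available powers, and the naive combinations degenerate (they force a negative multiplicity) unless one injects a high-\emph{ratio} power such as $\phi^{4}$; showing that such a nonnegative representation always exists for the residues coming from $B_{N}$ is the first delicate point. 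Stage two is the genuine obstacle: the moves $(L_{2i},L_{6i})$ and $(m,m^{3})$ change $p$ and $r$ in a coupled fashion, so landing exactly on the parabola $r=p^{2}$ is a two-dimensional reachability question. I expect to settle it by first choosing conjugate pairs to make $p$ large and $r$ slightly below $p^{2}$, and then closing the remaining gap using the classical flexibility in prescribing a sum and a cube-sum by positive integers, keeping the number of such integers below the count of irrationals already present in $B_{N}$ so that the majority condition survives.
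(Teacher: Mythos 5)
Your coordinate reduction in $\mathbb{Z}[\phi]$ is sound as far as it goes: $\beta^2=1-\beta$, the identity $(p+q\beta)^2=(p^2+q^2)+(2pq-q^2)\beta$, the Fibonacci expansions $\beta^k=(-1)^k(F_{k-1}-F_k\beta)$ and $\phi^k=F_{k+1}+F_k\beta$, and the fact that a conjugate pair $\{\phi^{2i},\beta^{2i}\}$ adds $(L_{2i},L_{6i})$ to the rational coordinates while fixing $(q,t)$ are all correct. But what you have written is a program, not a proof: both load-bearing steps are left as declared expectations. Stage one --- reaching $q=t=0$ by a \emph{nonnegative} integer combination of the move vectors $\pm(F_k,F_{3k})$ --- is a two-dimensional semigroup reachability question that you yourself call ``delicate'' and do not settle; the coupling $F_{3k}\approx F_k^3$ makes it far from clear that the reachable cone covers the residues produced by $B_N$. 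Stage two is the real obstruction: you must land exactly on the parabola $r=p^2$ using moves $(L_{2i},L_{6i})$ and $(m,m^3)$, and the ``classical flexibility in prescribing a sum and a cube-sum by positive integers'' you invoke does not exist as an off-the-shelf lemma --- extending a given pair (sum, cube-sum) to satisfy cube-sum $=$ square of sum is exactly the nontrivial adjoining problem studied in Sections~\ref{adjo}--\ref{theg1} of this paper and in Barbeau--Seraj \cite{Ba}, and even if solvable it comes with no bound guaranteeing that the number of adjoined rational integers stays below $N$, which your majority count needs. So the argument has genuine gaps at both of its critical steps.

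For contrast, the paper's proof avoids all of this Diophantine matching by exploiting multiplicativity of $\nu$ under the bag product. One verifies the single six-element sequence $A_x=\{1,2x,3,4x,5,6x\}$ with $x=\frac14(\sqrt5-1)$ (so $2x=1/\phi$) satisfies $\nu(A_x)=-72(x-1)(4x^2+2x-1)=0$, then iterates $w\mapsto A_x^{**}w$. By (\ref{nulen}), every iterate satisfies $\nu=0$ with no further computation; the powers $(1/\phi)^n$ for $1\le n\le N$ appear automatically after enough iterations because $A_x$ contains both $1$ and $1/\phi$; and since a product of elements of $A_x$ is rational only when every factor is rational, the rationals form a fraction $(1/2)^n$ of $A_x^{**n}$, so the irrationals dominate. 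If you wish to salvage your construction, you would have to prove the two reachability lemmas, which appears to be harder than the proposition itself; otherwise the bag-product route is the efficient one.
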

\begin{proof} Take  a set $\{ 1 , 2 , 3, 4,5, 6 \}$ and multiply (rather than translate by $t$) the subset $\{2, 4, 6\}$ by the variable $x$. Then for $A_x=\{1 , 2x , 3, 4x,5, 6x\}$,
$$
\nu \left(A_x\right) = -72 (x-1) (4 x^2+2x-1)=0
$$
when $x = ( 1 / 4 ) \left( - 1 + \sqrt 5\right)$, and $2 x = 1 / \phi$. This $x$ gives us positive algebraic integers $2x$, $4x$, $6x$ with minimal polynomials
$$
      x ^ 2 + x - 1 , \,\,\, x ^ 2 + 2 x - 4 , \,\,\, x ^ 2 + 3 x - 9,
$$
respectively. Iterate $w\rightarrow A_x^{**}w$ with initial condition $A_x$, where $x = ( 1 / 4 ) \left( - 1 + \sqrt 5\right)$ a sufficient number of times. Then $\nu=0$ for the resulting sequence by (\ref{nulen}), and since the product of a non-zero rational with a positive irrational is irrational, the irrationals will ultimately dominate. Also, since $A_x$ has both $1$ and $1 / \phi$, we obtain the powers of $1 / \phi$.
\end{proof}

\vskip0.5cm
\par\noindent
\section{ADJOINING NUMBERS TO SEQUENCES} \label{adjo}
For $m\geq 2$,
$$
\nu(\sigma)=-(x-m-1)x (x+m),
$$
where $\sigma=\{1,\,2,\, 3, \,\cdots\,,\,m,\,x\}$. This says if we want $x$ to preserve the Nicomachian relation we can go forward with $x = m+1$, go back with $x = -m$ , or do nothing $( x = 0 )$. Going forward involves the number $m+1$, the ``positive root''.
\par
Toward ``positive root'' direction, we let $\{a_1, \,a_2, \,\cdots\,, \,a_n\}$ be a finite sequence of real numbers, and keep
adjoining nonzero real numbers $x_1, \,x_2, \,\cdots$ with $x_j\neq -x_{j-1}$ for each $j$ so that each resulting sequence satisfies $\nu=0$. Then for $l\geq 2$, it follows from
$$
\nu\left(a_1, a_2, \cdots, a_n, x_1, x_2, \cdots, x_l\right)=\nu\left(a_1, a_2, \cdots, a_n, x_1, x_2, \cdots, x_l, x_{l+1}\right)=0
$$
that
\begin{equation}\begin{split}
&\left(\sum_{k=1}^n a_k+x_1+x_2+\cdots+x_l\right)^2+x_{l+1}^3
=\sum_{k=1}^n a_k^3+x_1^3+x_2^3+\cdots+x_l^3+x_{l+1}^3\\
=&\left(\left[\sum_{k=1}^n a_k+x_1+x_2+\cdots+x_l\right]+x_{l+1}\right)^2
\nonumber
\end{split}\end{equation}
and so
\begin{equation}
x_{l+1}^2=x_{l+1}+2\left(\sum_{k=1}^n a_k +x_1+\cdots+x_l\right).
\label{bbnn10}
\end{equation}
Similarly we get
\begin{equation}
x_{l}^2=x_{l}+2\left(\sum_{k=1}^n a_k +x_1+\cdots+x_{l-1}\right).
\label{bbnn11}
\end{equation}
Substraction (\ref{bbnn11}) from (\ref{bbnn10}) gives
$$
x_{l+1}^2-x_l^2=x_{l+1}-x_l+2x_l=x_{l+1}+x_l,
$$
which implies that the adjoined numbers
$$x_2, \,x_3,\, x_4, \, \cdots
$$
form an arithmetic progression with difference $1$. Putting the above together, we have the following.
\vskip0.3cm
\par\noindent
\begin{prop} Let
$$
\{a_1, \,a_2, \,\cdots\,, \,a_n\}
$$
be a finite sequence of real numbers, and keep
adjoining nonzero real numbers
$$
x_1, \,x_2, x_3,\,\cdots
$$
with $x_j\neq -x_{j-1}$ for each $j$ so that each resulting sequence satisfies $\nu=0$. Then the adjoined numbers
$$x_2, \,x_3,\, x_4, \, \cdots
$$
form an arithmetic progression with difference $1$.
\label{pf7776}
\end{prop}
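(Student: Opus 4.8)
The plan is to turn each ``$\nu=0$'' condition into a single scalar equation in the partial sums, and then to subtract two consecutive such equations so that the common cubic history cancels, leaving a clean recurrence among the adjoined terms.

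First I would introduce notation for the bookkeeping. Put $S=\sum_{k=1}^n a_k$, $T=\sum_{k=1}^n a_k^3$, and for $l\geq 1$ write the partial sums $P_l=S+x_1+\cdots+x_l$ and $Q_l=T+x_1^3+\cdots+x_l^3$. The assumption that the sequence obtained after adjoining $x_1,\dots,x_l$ satisfies $\nu=0$ is exactly $P_l^2=Q_l$, and the same holds at the next stage, $P_{l+1}^2=Q_{l+1}$. Since $P_{l+1}=P_l+x_{l+1}$ and $Q_{l+1}=Q_l+x_{l+1}^3$, expanding $P_{l+1}^2=Q_{l+1}$ and cancelling the common relation $P_l^2=Q_l$ leaves $2P_l x_{l+1}+x_{l+1}^2=x_{l+1}^3$. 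Because each adjoined number is nonzero, I may divide by $x_{l+1}$ and obtain the quadratic relation (\ref{bbnn10}), $x_{l+1}^2=x_{l+1}+2P_l$. This cancellation is the crux of the computation.

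Next I would write the same relation one index lower, namely (\ref{bbnn11}), $x_l^2=x_l+2P_{l-1}$, which is available once $l\geq 2$ so that both the stage-$(l-1)$ and stage-$l$ conditions are in force. Subtracting (\ref{bbnn11}) from (\ref{bbnn10}) and using $P_l-P_{l-1}=x_l$ collapses the right side to $x_{l+1}+x_l$, giving $x_{l+1}^2-x_l^2=x_{l+1}+x_l$. Factoring the difference of squares yields $(x_{l+1}+x_l)(x_{l+1}-x_l-1)=0$, so at each step either $x_{l+1}=-x_l$ or $x_{l+1}=x_l+1$.

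The only genuinely nonmechanical point is ruling out the degenerate factor, and this is exactly the purpose of the standing hypothesis $x_j\neq -x_{j-1}$: it forbids $x_{l+1}=-x_l$ and forces $x_{l+1}-x_l=1$ for every $l\geq 2$. Hence consecutive terms from $x_2$ onward increase by $1$, so $x_2,x_3,x_4,\dots$ is an arithmetic progression of common difference $1$, as claimed. I would emphasize the index bookkeeping that explains why the progression begins at $x_2$ and not $x_1$: forming the difference relation at index $l$ requires the two consecutive quadratic relations (\ref{bbnn10}) and (\ref{bbnn11}), and the earliest index at which both are available is $l=2$. Equivalently, $x_1$ is pinned down only by the single cubic condition $\nu(a_1,\dots,a_n,x_1)=0$ and is not subject to the difference relation, so it need not lie on the progression.
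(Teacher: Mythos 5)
Your proposal is correct and follows the paper's own argument essentially step for step: both derive the quadratic relation $x_{l+1}^2=x_{l+1}+2\bigl(\sum_k a_k+x_1+\cdots+x_l\bigr)$ from two consecutive $\nu=0$ conditions (dividing by the nonzero $x_{l+1}$), subtract the analogous relation one index lower, and use $x_{l+1}+x_l\neq 0$ to extract $x_{l+1}-x_l=1$. Your write-up is in fact slightly more explicit than the paper's, which leaves the final factoring of $x_{l+1}^2-x_l^2=x_{l+1}+x_l$ and the reason the progression starts at $x_2$ implicit.
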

\vskip 0.3cm
\par\noindent
As an example, consider
$$
\sigma=\{a_1,\, a_2, \,\cdots\,, \,a_{m+1}\}=\{ 1, \, h,\, h, \,\cdots, \,h\},
$$
where  a real number $h$ is repeated $m$ times, and suppose
$$
\{ 1, \, h,\, h, \,\cdots, \,h, \,x_1, \,x_2, \,\cdots\},
$$
where $x_j\neq -x_{j-1}$ for each $j$, and each sequence
$$
\{ 1, \, h,\, h, \,\cdots, \,h, \,x_1, \,x_2, \,\cdots, \, x_j\},\,\,\, (j\geq 2),
$$
satisfies $\nu=0$. Then by Proposition~\ref{pf7776}, $\{x_2,\, x_3, \,x_4, \,\cdots\}$ is an arithmetic progression with difference $1$. Then using computer algebra, we can show that the substraction of $1/2$ from the first element of this arithmetic progression, i.e. $x_2-1/2$, is a root of the polynomial
$$
P(h,m,x)=c_0+c_2x^2+c_4x^4+x^6,
$$
where
\begin{equation}\begin{split}
c_0&=\frac{1}{64} \left(-512 h^3 m^3-1728 h^2 m^2+8 h \left(64 h^2-243\right) m-225\right),\\
c_2&=\frac{1}{16} \left(192 h^2 m^2+432 h m+259\right),\\
c_4&=\frac{1}{4} (-24 h m-35).
\nonumber
 \end{split}\end{equation}
 Moreover
 $$
 P(h,m,x)-P(m,h,x)=8 h m (h-m) (h+m).
 $$
 \par
Next we obtain an interesting sequence with Fibonacci numbers satisfying $\nu=0$. The sequence
\begin{equation}
\{ n , n , \cdots , n \},
\label{ncopis}
\end{equation}
where the first $n$ copies $n$ times, has $\nu=0$. A natural problem might be ``when can one extend the sequence (\ref{ncopis}) by an arithmetic progression of length $n$ to obtain a sequence with $\nu=0$?''. Let
$$
S=\{ n , n , \cdots , n , a , a+s , a+2s , \cdots , a+(n-1)s \},
$$
where the first $n$ copies $n$ times. This is made of a join of (\ref{ncopis}) and an arithmetic progression of length $n$ with difference $s$. Then a direct computation yields
$$
\nu(S)= \left( n a +  \frac 12 n ( n - 1 ) s \right)  \left( 2 n ^ 2 + n a - a ^ 2 + \frac 12  n ( n - 1 )  ( s - s ^ 2 ) - ( n - 1 ) a s \right).
$$
If the first factor is set equal to zero, then
$$
a=-\frac{1}{2} (n-1) s
$$
and $S$ becomes the join of $\{ n , n , \cdots , n\}$ and
$$
\left\{ -\frac 12 (n-1 ) s,  -\frac 12 (n-3) s,  -\frac 12 (n-5) s, \cdots,  \frac 12 (n-5 ) s, \frac 12 (n-3) s, \frac 12  (n-1) s  \right\}
$$
whose sum and the sum of cubes are both zero, and so only trivialities emerge. So our main question would be ``for what $n$ can the second factor be divisible by $a - h$ where $h$ is a positive integer?''.
\par
The second factor has the discriminant
$$
\text{Disc}_a= ( 9 - s ^ 2 ) n ^ 2 + s ^ 2.
$$
This is negative for $n\geq 2$ and $s\geq 4$. For $s=3$, the second factor becomes
$$
-(-3 + a + n) (a + n).
$$
Hence only $s\leq 2$ is of interest. Below $F_k$ represents a Fibonacci number.
\vskip 0.3 cm     \par\noindent
\begin{prop} For $s=1$, the second factor is divisible by $a - h$ where $h$ is a positive integer if and only if $n^2$ is a triangular number. For $s = 2$, the second factor is divisible by $a - h$ where $h$ is a positive integer if and only if $n = F_{2k}$ where $k$ is a positive integer. Moreover in this case
$$
a=F_{2k-1}+1.
$$
\end{prop}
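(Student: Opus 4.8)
The plan is to regard the second factor as a quadratic polynomial in $a$ and to decide when it has a positive integer root. Writing it as
$$
Q(a)=-a^2+\bigl(n-(n-1)s\bigr)a+2n^2+\tfrac12 n(n-1)(s-s^2),
$$
divisibility by $a-h$ with $h$ a positive integer is the same as $h$ being a positive integer root of $Q$. For both $s=1$ and $s=2$ every coefficient of $Q$ is an integer, so $-Q$ is monic with integer coefficients; by the rational root theorem a rational root must be an integer, and such a root exists precisely when the discriminant $\mathrm{Disc}_a=(9-s^2)n^2+s^2$ is a perfect square. Moreover, since in these cases $\mathrm{Disc}_a\equiv\bigl(n-(n-1)s\bigr)^2\equiv n-(n-1)s \pmod 2$, a perfect-square value $\mathrm{Disc}_a=d^2$ forces $d\equiv n-(n-1)s\pmod 2$, so the root $\tfrac12\bigl((n-(n-1)s)\pm d\bigr)$ is automatically an integer. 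The whole question therefore reduces to deciding when $\mathrm{Disc}_a$ is a perfect square and then identifying the positive root.

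For $s=1$ one has $\mathrm{Disc}_a=8n^2+1$, and I would use the elementary identity $8T_k+1=(2k+1)^2$, where $T_k=k(k+1)/2$. Thus $8n^2+1$ is a perfect square, say $(2k+1)^2$, if and only if $n^2=T_k$ is triangular. In that case $Q(a)=0$ reads $a^2-a-2n^2=0$, with roots $k+1$ and $-k$; the positive one is $a=k+1$, a positive integer (note $k\geq 1$ for $n\geq 1$), which proves the first equivalence.

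For $s=2$ one has $\mathrm{Disc}_a=5n^2+4$, and the key input is the Fibonacci--Lucas identity $L_m^2-5F_m^2=4(-1)^m$. For $m=2k$ this gives $L_{2k}^2=5F_{2k}^2+4$, so $n=F_{2k}$ makes $\mathrm{Disc}_a=L_{2k}^2$ a perfect square; substituting $d=L_{2k}$ into the positive root $\tfrac12\bigl((2-n)+d\bigr)$ and simplifying with $L_{2k}=2F_{2k-1}+F_{2k}$ collapses it to $a=F_{2k-1}+1$, which is the asserted value. The main obstacle is the converse direction: that $5n^2+4$ being a perfect square forces $n=F_{2k}$. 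For this I would study the Pell-type equation $d^2-5n^2=4$ and show its positive solutions $(d,n)$ are exactly the pairs $(L_{2k},F_{2k})$, either by descent from the fundamental solution or by parametrizing solutions through powers of the fundamental unit of $\mathbb{Z}[(1+\sqrt5)/2]$; this is the classical fact that a positive integer $n$ satisfies $5n^2+4=\square$ precisely when it is an even-indexed Fibonacci number. Combining the two directions with the root computation completes the proof.
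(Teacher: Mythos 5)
Your proposal is correct and follows essentially the same route as the paper's proof: compute $\mathrm{Disc}_a$ ($8n^2+1$ for $s=1$, $5n^2+4$ for $s=2$), characterize when it is a perfect square (iff $n^2$ is triangular, respectively iff $n=F_{2k}$ via the classical $5n^2\pm 4$ criterion), and read off the positive root from the quadratic formula using $L_{2k}=2F_{2k-1}+F_{2k}$. You in fact supply details the paper leaves implicit, namely the parity/rational-root argument guaranteeing the root is an integer and the Pell-equation justification of the even-index Fibonacci characterization, which the paper simply asserts.
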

\begin{proof} For $s=1$, the second factor is
\begin{equation}
2n^2+a-a^2
\label{12max}
\end{equation}
and its discriminant is
$$
\text{Disc}_a = 1+8n^2.
$$
This is a perfect square exactly when $n^2$ is a triangular number, and in this case $(\ref{12max})=0$ implies that
$$
a=\frac 12 (1+\sqrt{1+8n^2})
$$
 is a positive integer. For $s = 2$ the second factor above is
$$
G = n ^ 2 + ( 1 - a ) n - a ^ 2 + 2 a
$$
and
$$
\text{Disc}_a G=4 + 5 n ^ 2 .
$$
This is a perfect square exactly when $n= F_{2t}$, where $t\geq 1$. In this case
\begin{equation}\begin{split}
a=&\frac{1}{2} \left(2-n+\sqrt{5 n^2+4}\right)=\frac{1}{2} \left(2-F_{2t}+\sqrt{5 F_{2t}^2+4}\right)\\
=&F_{2t-1}+1.
\label{ohjn}
\end{split}\end{equation}
\end{proof}
\vskip 0.3cm
\par\noindent
From above proposition, we get interesting sequences with Fibonacci numbers satisfying $\nu=0$.
\vskip 0.3cm
\par\noindent
\begin{eg} Let
$$
S = \{ F_{2 n - 1 } + 1 , F_{2 n - 1 } + 3 , F_{2 n - 1 } + 5 , \cdots , F_{2 n + 2} - 1 \} .
$$ Then $S$ is an arithmetic progression with difference $2$ and length $F_{2 n}$. Let $A = F_{2n}^{ ( F_{2 n} )}$. Then the following sequences have $\nu=0$:
\begin{equation}\begin{split}
&A,\\
&\{ A; \,\, S \},\\
&\{ A; \,\,S; \,\,S + F_{2n}\},\\
&\{ A ; \,\,S ; \,\,S + F_{2n}; \,\,S + 2 F_{2n}\} , \\
&\qquad \vdots\\
&\{ A ; \,\,S ; \,\,S + F_{2n}; \,\,S + 2 F_{2n};\,\,\cdots\,\, ; \,\,S+m F_{2n}\}.
\nonumber
\end{split}\end{equation}
\end{eg}

 \vskip 0.5cm
 \section{ADJOINING A FEW NUMBERS TO SEQUENCES} \label{theg}
 There is a fairly simple type of sequence of positive integers with
the following property: if a single variable is adjoined and the $\nu$ operator applied, the
resulting cubic factors completely over the integers, and two of the roots
are consecutive positive integers.
\vskip0.3cm
\par\noindent
\begin{eg} An example is
$$
\{ 1 , 2 , 2 , 3 , 4 , 4 , 5 , 6 , 8 , 10 ,12 , x \},
$$
where $\nu$ of this equals $- ( x - 6 ) ( x - 7 ) ( x + 12 )$. Another interesting example is the join, say $A_n$, of
$$
\{ 1 , 2 , 3 , \cdots , 2 n + 1 \}, \quad \{ 2 , 4 , 6 , \cdots , 2 n \}, \quad \{ 2 ( n + 1 ) , 2 ( n + 2 ) , \cdots , 2 ( n + ( n + 2 ) ) \}.
$$
Here
$$
 \nu(A_n; x)=- (2 + 2 n - x) (3 + 2 n - x) (4 + 4 n + x).
$$
\end{eg}
\vskip0.3cm
\par
 In another direction, one could search for $\sigma$ with $\nu(\sigma)=0$ for many $\sigma$ of the form
$$
\{1, 2, 3, \cdots, m, a, b\}
$$
and look for patterns, especially for $m$ belonging to particular arithmetic progressions. An infinite number of such identities with $m$ of the form $7n-12$ is given by the identity (\ref{11id}) of the introduction. It has the remarkable feature that if the $3n-3$ is replaced by $5n-8$ on each side in (\ref{11id}), the equality remains valid. There is also a ``nearby'' identity, namely
\begin{equation}\begin{split}
&\left(\frac {(7n-10)(7n-9)}2+(8n-10)+(3n-4)\right)^2\\
=&\left(\frac {(7n-10)(7n-9)}2\right)^2+(8n-10)^3+(3n-4)^3.
\nonumber
\end{split}\end{equation}
Here, if the $3n-4$ terms are replaced by $5n-5$, the equality remains true. Another identity of this type is
\begin{equation}\begin{split}
&\left(\frac {(13n+35)(13n+36)}2+(15n+42)+(7n+21)\right)^2\\
=&\left(\frac {(13n+35)(13n+36)}2\right)^2+(15n+42)^3+(7n+21)^3,
\nonumber
\end{split}\end{equation}
and this remains true with $(7n+21)$ replaced by $(8n+22)$. Yet another is
\begin{equation}\begin{split}
&\left(\frac {(13n+42)(13n+43)}2+(15n+50)+(7n+3)\right)^2\\
=&\left(\frac {(13n+42)(13n+43)}2\right)^2+(15n+50)^3+(7n+3)^3,
\nonumber
\end{split}\end{equation}
which remains true if $7n+3$ is replaced by $(8n+28)$.
\par
There is another observation to make here. In the identity (\ref{11id}), replace every $7n$ by the variable expression $tn$. Then the difference of the two sides is
$$
n(t-7)(11n-15)(n(t+7)-23).
$$
In fact each side satisfies, as a function of $t$, the functional equation
$$
f(7)=f\left( \frac {23}n-7\right).
$$
Similar remarks to apply to the other identities.
\par
For an example in which the two sides have the same general form as those displayed here, but whose difference is a non-zero constant, see (\ref{nuci1}).
\par
Next, here is an ``adjoin $\{a, b\}$ procedure'' that starts with a $\sigma$ such that  $\nu(\sigma)$ is not zero. It leads to a new infinite class of sequences satisfying the Nicomachean identity. Omit $m$ from $\{ 1 , 2 , 3 , \cdots, n \}$ and then adjoin $\{a , b \}$. For the case of $m=1$, there are two non-integer rational numbers $a$ and $b$ extending $\{ 2 , 3 , 4 , \cdots , n \}$ that satisfy Nicomachean identity. To show this we let
$$
p(n) =(1 + n) (1 + 2 n + 5 n^2 + n^3)
, \quad q ( n ) =(1 + 2 n) (1 + n + n^2) .
$$
Note that
$$
p ( n ) - p ( - ( n + 1 ) ) = ( 1+2 n  ) ^ 3 .
$$
Define a sequence $R(n)$ of $n + 2$ elements by
$$
R ( n ) = \left \{ 2 , 3 , 4 , \cdots , n , \frac { - p ( - ( 1+n  ) )}{q(n)},  \frac { p (n )}{q(n)} \right\}.
$$
Then $R = R ( n )$ satisfies Nicomachen identity. Note that the sum of the additional two elements is
$$
\frac { - p ( - ( 1+n ) )}{q(n)}+  \frac { p (n )}{q(n)}   =\frac { ( 1+2 n ) ^ 2}{ 1 + n + n ^ 2 } .
$$
For $m\geq 2$, we have $\{1,2,\cdots, m-1,m+1,m+2, \cdots, n, a, b\}$. Take
$$
(2n+1) (1 -  3m + 3 m ^ 2 +n+n ^2 )
$$
for the denominator of both $a$ and $b$. Then use $( n - ( m - 1 ) )  A$ as the numerator of $a$ and $(n + m ) B$ of $b$, where
$$
 A = 3 m^ 3+ ( - 1 + 3 m + 3 m ^ 2 ) n +  ( 3 m - 1 ) n ^ 2 - n ^ 3
$$
and
$$
B = 1 - 3 m ^ 2 + 3 m ^ 3 + ( 2 + 3 m -3 m ^ 2 ) n + ( 3 m + 2 ) n ^ 2 + n ^ 3  .
$$
Then adjoin $\{ a , b \}$ to $\{ 1 , 2 , 3 , \cdots , n \}$  where the $m$($\geq 2$) has been removed. This sequence satisfies the Nicomachean identity.  A useful check here is that
\begin{equation}
a + b = \frac { m( 2 n + 1 ) ^ 2 }{ 3 m ^ 2 - 3 m + 1 + n + n ^ 2 } .
\nonumber
\end{equation}
\vskip 0.5cm
\par\noindent
\section{THE GEOMETRY OF ADJOINING $\{a, b\}$ TO SEQUENCES} \label{theg1}
\begin{figure}[h]
    \centering
    \includegraphics[scale=0.6]{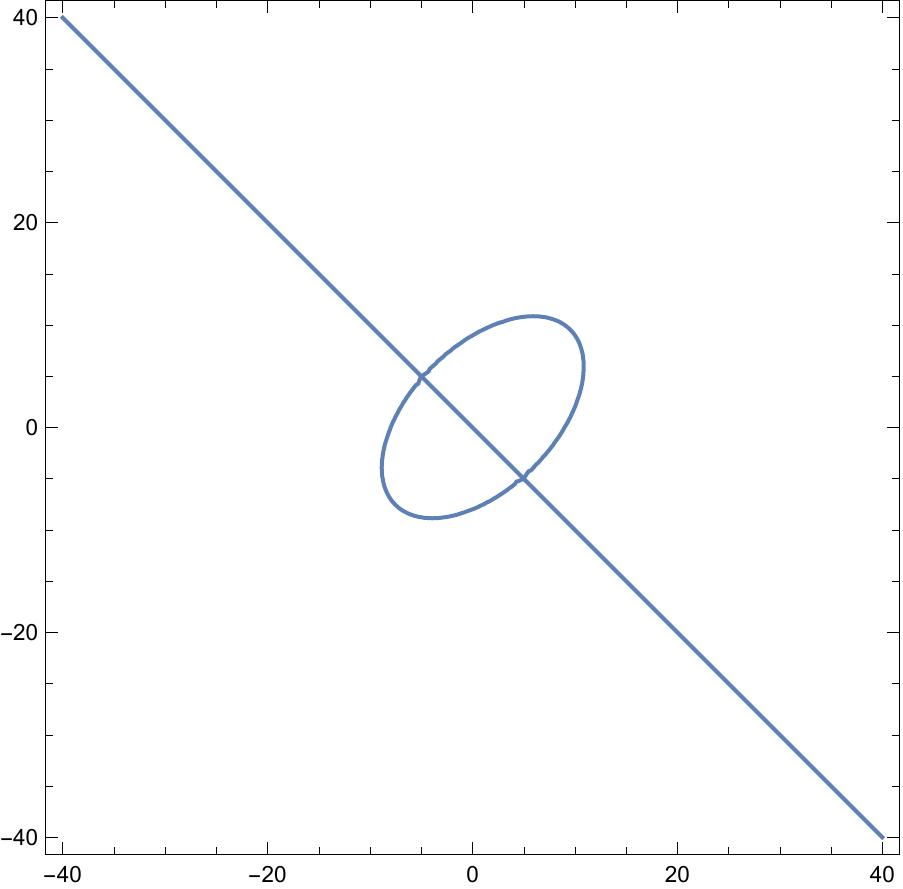}
    \caption{$\nu(\{1,2,\cdots, n, a,b\})=0$, where $n=8$}
\end{figure}
\par\noindent
 It seems peculiar that definite and indefinite quadratic forms play a significant role in a study of a cubic nature. But when $\nu(\sigma)=0$ and $\{a, b\}$ is adjoined to $\sigma$, the $\nu(\sigma; a, b)$ must have a factor $a+b$. In fact,
\begin{equation}\begin{split}
\nu(\sigma; a, b)&=\nu(\sigma)+a^2+b^2+2c(a+b)+2ab-(a^3+b^3)\\
&=-(a + b)(-a + a^2 - b - a b + b^2 - 2 c),
\nonumber
\end{split}\end{equation}
where $c$ is the sum of elements of $\sigma$. Upon discarding this factor we are left with quadratic polynomials. The ellipse formed by setting any these quadratic polynomials equal to zero has eccentricity $\sqrt {2/3}$.
\par
These quadratic polynomials depend on $c$. For example, the two sequences
$$
\sigma_1= \{ 1 , 2 , 3 , 3 , 3 , 5 , 5 , 6 , 7 , 9 , 10 , 12 \}, \quad
\sigma_2 =\{ 1 , 2 , 3 , \cdots , 10 , 11 \}
$$
satisfy $\nu=0$ and $c=66$ is the sum of elements of both sequences, and the corresponding ``adjoined'' quadratic polynomial is $a - a^2 + b + a b - b^2 +132$. The three sequence products originated from $\sigma_1$ and $\sigma_2$ are
$$
\sigma_1^{**} {\sigma_1}, \,\,\, \sigma_1^{**} \sigma_2 , \,\,\, \text{and}\,\,\,  \sigma_2^{**} \sigma_2
$$
that induce the same ``adjoined'' quadratic polynomial with $c=66^2$. Here the ``three'' is from $\binom {2+2-1}{2}=3$, the number of combinations with repetition. Continuing to triple products, we have $\binom {2+3-1}{3}=4$
$$
(\sigma_1^{**} {\sigma_1})^{**} {\sigma_1},  \,\,\,( \sigma_1^{**} \sigma_1)^{**} {\sigma_2}, \,\,\, \,\,\,( \sigma_1^{**} \sigma_2)^{**} {\sigma_2},\,\,\,( \sigma_2^{**} \sigma_2)^{**} {\sigma_2},
$$
that induce the same quadratic polynomial with $c=66^3$. In general we have $\binom {2+N-1}{N}=N+1$ sequence products with the same quadratic polynomial, where $c=66^{N}$ and we can obtain the following.
\vskip0.3cm
\par\noindent
\begin{prop} For any positive integer $N$ there are at least $N$ distinct sequences $\sigma$ with $\nu(\sigma)=0$ such that each $\nu(\sigma; a, b)$ has the same quadratic polynomial factor.
\end{prop}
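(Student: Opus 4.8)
The plan is to exploit the two explicitly given sequences $\sigma_1$ and $\sigma_2$, both satisfying $\nu=0$ with common element-sum $c=66$, together with the two multiplicative features of the bag product: it preserves the relation $\nu=0$, and the element-sum of $A^{**}B$ is the product of the element-sums of $A$ and $B$. The displayed factorization $\nu(\sigma;a,b)=-(a+b)(-a+a^2-b-ab+b^2-2c)$ shows that the adjoined quadratic factor depends on $\sigma$ \emph{only through} $c$, so the entire problem reduces to producing many sequences with $\nu=0$ that share one value of $c$.

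First I would fix $N$ and form, for each $i$ with $0\le i\le N$, the $N$-fold bag product $P_i$ built from $i$ copies of $\sigma_1$ and $N-i$ copies of $\sigma_2$. Since $^{**}$ is commutative and associative as a multiset operation, $P_i$ is well defined and depends only on $i$, giving $N+1$ candidates $P_0,\dots,P_N$. Iterating the basic principle that $\nu(A^{**}B)=0$ whenever $\nu(A)=\nu(B)=0$ shows $\nu(P_i)=0$ for every $i$. Because the element-sum is multiplicative under $^{**}$ and both building blocks have sum $66$, each $P_i$ has element-sum $66^{i}\cdot 66^{N-i}=66^{N}$, so every $P_i$ yields the identical quadratic factor $-a+a^2-b-ab+b^2-2\cdot 66^{N}$, independently of $i$.

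The one genuine point to nail down is that the $P_i$ are pairwise distinct \emph{as sequences}, since otherwise the count collapses. Here I would use the largest element as a distinguishing invariant: as all entries are positive, the maximum of $A^{**}B$ equals the product of the maxima of $A$ and $B$. With $\max\sigma_1=12$ and $\max\sigma_2=11$, the maximal element of $P_i$ equals $12^{i}11^{N-i}=11^{N}(12/11)^{i}$, which is strictly increasing in $i$; hence the $N+1$ sequences have distinct maxima and are therefore distinct. This already delivers $N+1$ sequences, exceeding the asserted $N$.

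I do not expect a serious obstacle: the only care needed is to confirm that the $^{**}$-sum and the $^{**}$-maximum really are multiplicative, both of which follow at once from $\sum_{i,j}a_ib_j=(\sum_i a_i)(\sum_j b_j)$ together with the positivity of all entries. If one wished to avoid any reliance on positivity, the length of $P_i$ (likewise $12^{i}11^{N-i}$, since $^{**}$ multiplies lengths) serves equally well as a distinguishing invariant, so the distinctness step is robust.
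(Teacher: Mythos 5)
Your proposal is correct and follows essentially the same route as the paper: the paper likewise forms the $\binom{2+N-1}{N}=N+1$ bag products of copies of $\sigma_1$ and $\sigma_2$ (both with $\nu=0$ and sum $66$), notes that the element-sum multiplies to $66^N$, and concludes that all induce the quadratic factor $-a+a^2-b-ab+b^2-2\cdot 66^N$. Your explicit verification that the products are pairwise distinct (via the multiplicativity of the maximum, or of the length) is a point the paper asserts only by counting multisets of factors, so your write-up is if anything slightly more complete.
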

\vskip 0.3cm
\par\noindent
 If we retain the $a+b$ factor and plot $\nu(\sigma; a, b)=0$ in the $(a, b)$ plane, we have a line cutting through an ellipse. Since a line is a degree $1$ curve and an ellipse a degree $2$ curve, we have $(1+2=3)$ a degenerate cubic. For example, Figure 1 shows the locations of $(a, b)$ where $\nu(\{1,2,\cdots, 8, a,b\})=0$.
\par
Now
$$
w_n=\{1,2,2,3,4,5,\cdots,n\}, \,\,\, n\geq 3,
$$
has
$$
\nu(w_n)=2 (-2 + n + n^2)\neq 0,
$$
and the locus of $\nu(w_n; a, b)=0$ is such a typical cubic curve. Here we have
\begin{equation}\begin{split}
\nu(w_n; a, b)&=\frac 14 (-32 - n^2 - 2 n^3 - n^4) - a^3 - b^3 + \left(\frac 12 (4 + n + n^2) + a +
    b\right)^2\\
&=2 (-1 + n) (2 + n)+ (4 + n + n^2)a+a^2 - a^3+ (4 + n + n^2)b+2 a b+b^2 - b^3
\nonumber
\end{split}\end{equation}
A natural way to search for $a$ and $b$ with $\nu(w_n; a, b)=0$ is to put the cubic into Weierstrass normal form. Consider
$$
h_n ( u , y) = -3 y ^ 2  + \frac 1{16}  (2 u + n + 2 )  P_n(u),
$$
where
$$
P_n ( u ) =3 (n + 2)^3 + 2 (n^2 + 20) u - 4 (3 n + 2) u^2 - 8 u^3
$$
is a cubic polynomial in $u$. Then $h_n ( u , y)=0$ is essentially in Weierstrass normal form. With
$$
u=\frac {a+b}2-\frac {n+2}2, \quad y=\frac {a^2-b^2}4,
$$
we may compute that the $h_n ( u , y)$ becomes
$$
\frac 14 (a + b ) \nu(w_n; a, b).
$$
\par
In the above particular example with $n=8$, the curve $\nu(w_8; a, b)=0$ is given by
$$
140 +76a+a^2-a^3 +76b+ b^2-b^3+2ab =0.
$$
As shown in Figure 2, the locus of this consists of a smooth finite oval disjoint from a smooth infinite curve that is a configuration reminiscent of a textbook plot of certain typical real loci of cubic (elliptic) curves. Moreover, as $n$ increases the corresponding sequence of cubic curves displays a nested sequence of smooth ovals. For this see Figure 3. For the case $n=8$ again, a numerical search gives the points $(0,10)$, $(10,0)$, $(9, 11)$ and $(11,9)$ on the infinite curve and many integer points with coordinates at most $0$ on the finite oval. For example, $(0,-2)$, $(0-7)$, $(-2,-8)$ and $(-8, -2)$ are there. For $(a, b) = (11, 9)$ we have
$$
\tau = \{1, 2, 2, 3, 4, \cdots, 9, 11\}.
$$
an example of one of the $\sigma_m$ sequences studied in Section 3.
\begin {figure}[h]
    \centering
    \begin{minipage}[b]{0.4\textwidth}
    \includegraphics[width=\textwidth] {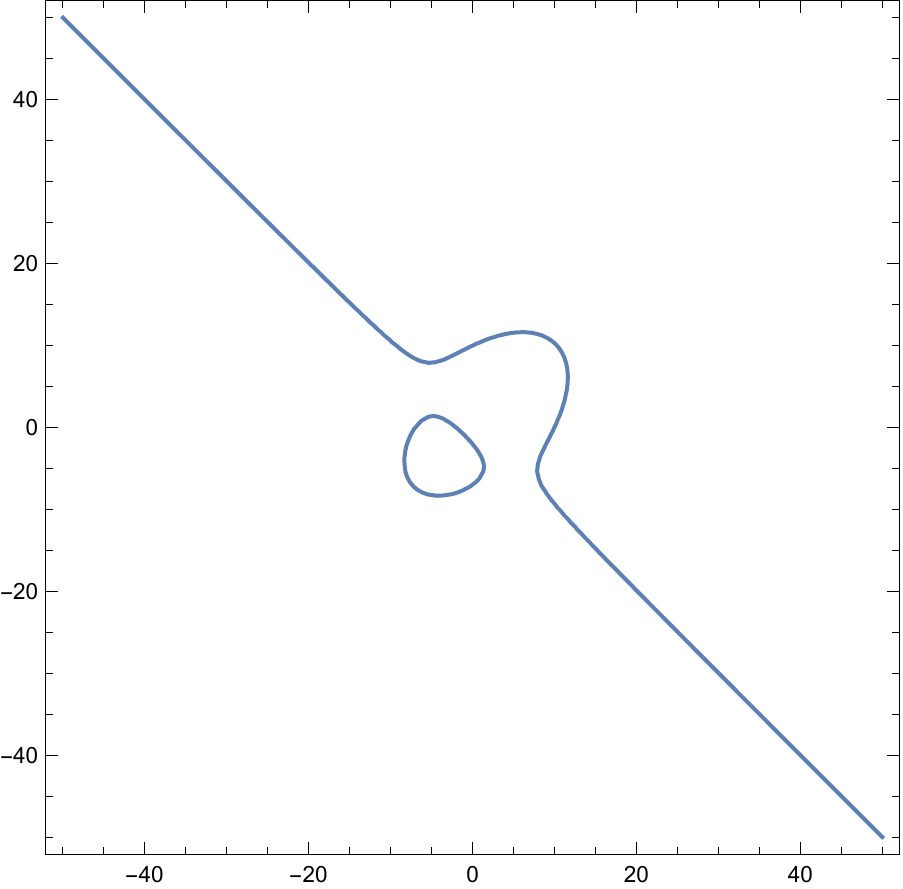}
    \caption {$\nu(w_8; a, b)=0$}
\end {minipage}
 \hfill
 \begin{minipage}[b]{0.4\textwidth}
    \includegraphics[width=\textwidth] {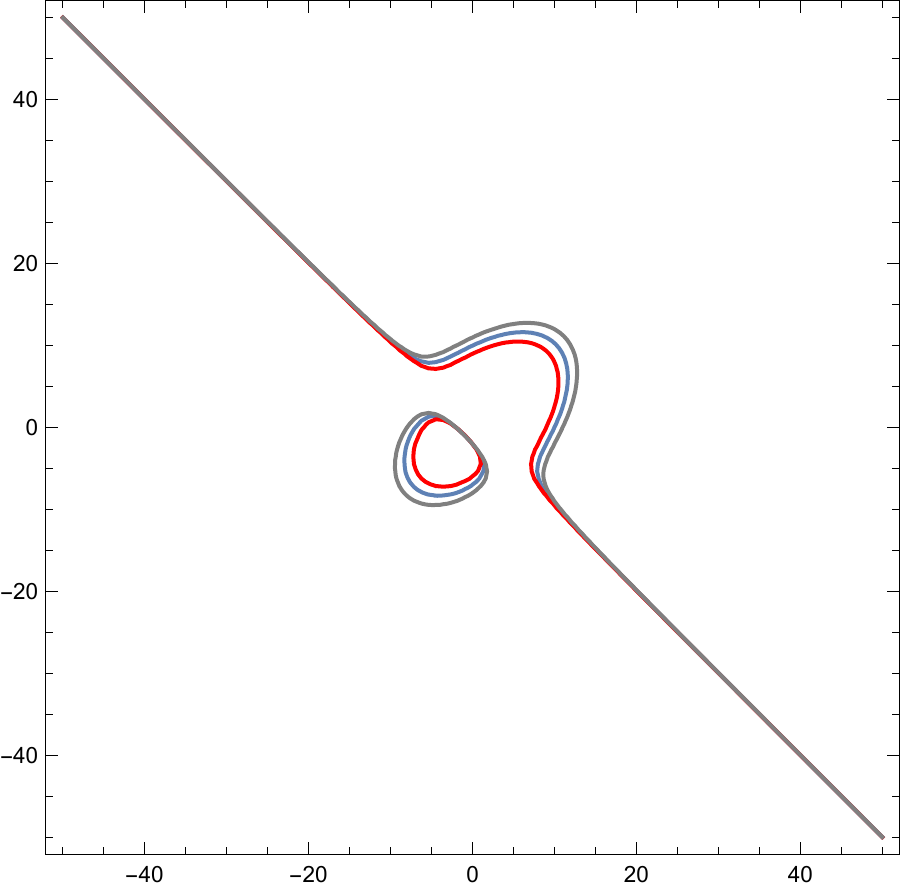}
    \caption {$\nu(w_n; a, b)=0$, where $n=7,8,9$.}
    \end{minipage}
\end {figure}
\par\noindent
Note that for all $n$ we have $\nu(w_n; -1, -1)= -6$ and every finite oval is tangent to the line
$$
a+b=-2
$$
at $(0, -2)$ and $(-2, 0)$. The infinite curve is asymptotic to the line
$$
a+b=0.
$$
\vskip0.5cm
\par\noindent

\end{document}